\newtheorem{lemma}{Lemma}[section]
\newtheorem{theorem}[lemma]{Theorem}
\newtheorem{corollary}[lemma]{Corollary}
\newtheorem{proposition}[lemma]{Proposition}
\newtheorem{remark}[lemma]{Remark}
\newtheorem{definition}[lemma]{Definition}
\def\Carre#1#2{\vbox{
   \hrule height .#2pt
   \hbox{\vrule width .#2pt height #1pt \kern #1pt
      \vrule width .#2pt}
   \hrule height .#2pt}}
\def\LM#1{\hbox{\vrule width.2pt \vbox to#1pt{\vfill \hrule width#1pt
height.2pt}}}
\def\LL{{\mathchoice {\>\LM7\>}{\>\LM7\>}{\,\LM5\,}{\,\LM{3.35}\,}}}
\def\restr{{\LL}}
\def\Om{\Omega}
\def\e{\varepsilon}
\def\eps{\varepsilon}
\def\R{\mathbb{R}}
\def\Sp{\mathbb{S}}
\def\H{\mathcal{H}}
\def\HH{\mathcal{H}^{d-1}}
\def\Div{\textup{div}\,}
\def\supp{\textup{spt}\,}
\def\car#1{\raisebox{1pt}{$\chi$}_{#1}} %% ??
\def\ov{\overline}
\def\onu{\ov{\nu}}
\def\half{\frac{1}{2}}
\begin{document}

\title{Fine properties of the subdifferential for a 
class of one-homogeneous functionals}
\author{A. Chambolle \footnote{CMAP, Ecole Polytechnique, CNRS,
        Palaiseau, France, email: antonin.chambolle@cmap.polytechnique.fr},
\and
        M. Goldman
        \footnote{Max-Planck-Institut f\"ur Mathematik
in den Naturwissenschaften, Inselstrasse 22, 04103 Leipzig,
Germany, email: goldman@mis.mpg.de, funded by a Von Humboldt PostDoc fellowship}
        \and M. Novaga
        \footnote{Dipartimento di Matematica, Universit\`a di Pisa,
        Largo B. Pontecorvo 5, 56127 Pisa, Italy, email: novaga@dm.unipi.it}
}
\date{}
\maketitle

\begin{abstract}
% We collect some known results on the subdifferential 
% of one-homogeneous functionals, 
% which are anisotropic and non-homogeneous variants of the total variation,
% and establish a new relationship between Lebesgue points of the calibrating field and regular
% points of the level lines of the corresponding calibrated function.
We collect some known results on the subdifferentials of a class of one-homogeneous functionals, which consist in anisotropic and nonhomogeneous variants of the total variation.
It is known that the subdifferential at a point is the divergence of some ``calibrating field''.
We establish new relationships between Lebesgue points of a calibrating field and regular points of the level surfaces of the corresponding calibrated function.
\end{abstract}

%% Keywords: total variation, functions with bounded variation, calibrations,
%% reduced boundary

\section{Introduction}
In this note we recall some classical results on the structure of the subdifferential of first order one-homogeneous functionals, and
we give new regularity results which extend and precise previous work by G. Anzellotti \cite{Anzelotti,anzeltrace,anzeleuler}.

Given an open set $\Om\subset\R^d$  with Lipschitz boundary, and a function $u\in C^1(\Om)\cap BV(\Om)$, we consider the functional
\[
J(u)\ :=\ \int_\Om F(x,Du) %\qquad u\in BV(\Om)\,,
\]
where $F:\Om\times\R^d\to [0,+\infty)$ is continuous, 
and $F(x,\cdot)$ is a smooth and uniformly convex positively
one-homogeneous functional on $\R^d$ 
for all $x\in \Om$. The functional $J$ can be canonically 
relaxed to the whole of $BV(\Om)$ (see \cite[Section 5.5]{AFP})
and we still write, in analogy with the notation commonly used for the total variation, 
$J(u)=\int_\Om F(x,Du)$ for $u\in BV(\Om)$, 
where $F(x,Du)$ is in general a Radon measure in $\Om$ defined by $F(x,Du):=F\left(x,\frac{Du}{|Du|}\right)|Du|$ with $\frac{Du}{|Du|}$ the Radon-Nikodym derivative of $Du$ with respect to $|Du|$.

Since $BV(\Om)\subset L^{d/(d-1)}(\Om)$, it is
natural to consider $J$ as a convex, l.s.c.~function on the whole of 
$L^{d/(d-1)}(\Om)$,
with value $+\infty$ when $u\not\in BV(\Om)$. 
In this framework,
for any $u\in L^{d/(d-1)}(\Om)$ such that $J(u)<+\infty$,
that is $u\in BV(\Om)$,
we can define the subdifferential of $J$
at $u$, in the duality $(L^{d/(d-1)},L^d)$, as
\begin{equation*}%\label{defsub}
\partial J(u)\ :=\ \left\{
g\in L^d(\Om)\,:\, J(v)\ge J(u)\,+\,\int_\Om g(v-u)\,dx\ \forall
v\in L^{d/(d-1)}(\Om) \right\}.
\end{equation*}
Notice that a function $g\in L^d(\Om)$ belongs to $\partial J(u)$ 
if and only if $u$ is a minimizer of the functional $J(v)-\int_\Om gv dx$ 
among all $v\in L^{d/(d-1)}(\Om)$.

The goal of this paper is to investigate the particular structure
of the functions $u$ and $g$, when $g\in \partial J(u)$. 
Let
\[
F^*(x,z) := \sup_{w\in\R^d} z \cdot w - F(x,w) 
\]
be the Legendre-Fenchel or convex conjugate of $F$.
Notice that $F^*$ takes values in $\{0,+\infty\}$, 
and $F^*(x,z)=0$ if and only if $F^\circ(x,z)\le 1$, where
$F^\circ$ denotes the convex polar of $F$ defined in~\eqref{polar} below.

Given $u\in L^{d/(d-1)}$, the functional $J(u)$ can also be expressed by duality as
\[
J(u)\ =\ \sup\left\{
-\int_\Om u \Div z \,dx\ :\ z\in C_c^\infty(\Om;\R^d)\,,\ F^*(x,z(x))=0
\ \forall x\in\Om
\right\}.
\]
It follows that a function $g\in\partial J(u)$ has necessarily the form 
$g=-\Div z$, for some vector field 
$z\in L^\infty(\Om;\R^d)$ with $F^*(x,z(x))=0$ a.e.~in $\Om$.
Since by a formal integration by parts one gets $$z\cdot Du= F(x,Du),$$ 
two natural questions arise: 
\begin{itemize}
\item in what sense is this relation true? 
\item can one assign a precise value to $z$ on the support of the measure $Du$? 
\end{itemize}

The first question has been answered by
Anzelotti in the series of papers~\cite{Anzelotti,anzeltrace,anzeleuler}. 
However, for the particular vector fields we are interested in,
we can be more precise and obtain pointwise properties of $z$
on the level sets of the function $u$. Indeed, we shall show
that $z$ has a pointwise meaning on all level sets of
$u$, up to $\H^{d-1}$-negligible sets (which can be
much more than $|Du|$-a.e.,
as illustrated by the function $u =\sum_{n=1}^{+\infty} 2^{-n}\chi_{(0,x_n)}$,
defined in the interval $(0,1)$, with $(x_n)$ a dense sequence in that
interval).

We will therefore focus on the properties of the vector fields
$z\in L^\infty(\Om,\R^d)$ such that $F^*(x,z(x))=0$ a.e. in $\Om$ 
and $g=-\Div z \in  L^d(\Om)$, and such that there exists a function
$u$ such that for any $\phi\in C_c^\infty(\Om)$,
\[
-\int_\Om\Div z \,u\phi\,dx\ =\ \int_\Om u\,z\cdot\nabla\phi\,dx
+\int_\Om \phi F(x,Du)\,.
\]
In particular, one checks easily that $u$  minimizes the functional 
\begin{equation}\label{minsubgrad}
\int_\Om F(x,Du) - \int_\Om g u \,dx
\end{equation}
among perturbations with compact support in $\Om$. 
Conversely, given $g\in L^d(\Om)$ with $\|g\|_{L^d}$ sufficiently small,
there exist functions $u$ which
minimize~\eqref{minsubgrad} under various types of boundary conditions,
and corresponding fields $z$.
%[EXISTENCE GIVEN $g$?]
%We observe
%that given any $g\in L^d(\Om)$, if $\|g\|_{L^d}$ is not too large
%then there exist such  minimizers $u$ and the Euler-Lagrange
%equation for $u$ yields the existence of a field $z$ 

This kind of functionals appears in many contexts including image processing and plasticity \cite{Andreu,Temam}.
Notice also that, by the Coarea Formula \cite{AFP}, it holds
\begin{equation*}%\label{probfun}
\int_\Om F(x,Du) - \int_\Om g u\,dx
= \int_\R \left( \int_{\partial^* \{u>s\}} F(x,\nu) \,d\H^{d-1}(x)
- \int_{\{u>s\}} g\,dx\right)\,ds\,,
\end{equation*}
where $\nu$ is the unit normal to $\{u>s\}$,
and one can show (see for instance~\cite{CMCF}) that
any level set of the form
$\{ u>s\}$ or $\{ u\ge s\}$ is a minimizer of the geometric functional 
\begin{equation}\label{geomfun}
E\mapsto \int_{\partial^* E} F(x,\nu)\,d\H^{d-1}(x) - \int_E g\,dx\,. 
\end{equation}
defined for sets $E$ of finite perimeter.
The canonical example of such functionals is given by the total variation, 
corresponding to $F(x,Du)=|Du|$. 
In this case, \eqref{geomfun} boils down to 
\begin{equation}\label{geomper}
P(E)-\int_E g \,dx.
\end{equation}

In \cite{BarGonTa}, it is shown that every set with finite perimeter in $\Om$
is a minimizer of \eqref{geomper} for some $g\in L^1(\Om)$.
If $F$ is even in $\nu$ and when $g\in L^d(\Om)$, the boundary $\partial E$ is only of class $C^{0,\alpha}$ out of a singular set (see \cite{AmbPao}). However, if $g\in L^p(\Om)$ with $p>d$, 
and $E$ is a minimizer of \eqref{geomfun},
then $\partial E$ is locally $C^{1,\alpha}$ for some $\alpha>0$, out of a closed singular set of zero $\H^{d-3}$-measure \cite{ASS,SchoenSimon} (some
regularity assumption on $F$ is required, see also Remark~\ref{remDu} below).
 Since the Euler-Lagrange equation of \eqref{geomfun} relates $z$ to the normal of $E$, 
understanding the regularity of $z$ is closely related to understanding the regularity of $\partial E$.

\smallskip

Our main result is that the Lebesgue points of $z$ correspond to regular points of $\partial \{u>s\}$ or $\partial \{u\ge s\}$ (Theorem \ref{zfixPL}), 
and that the converse is true in dimension $d\le 3$ (Theorem \ref{propLebesgue}).

\section{Preliminaries}

\subsection{BV functions}
We briefly recall the definition of function of bounded variation and set of finite perimeter. For a complete presentation we refer to \cite{AFP}.

\begin{definition}
Let $\Om$ be an open set of $\R^d$, we say that a function $u\in L^1(\Om)$ is a function of bounded variation if
\[\int_\Om |Du|:= \sup_{\stackrel{z \in \mathcal{C}^1_c(\Om)}{|z|_\infty \leq 1}} \int_{\Om} u \ \Div z \ dx< +\infty.\]
We denote by $BV(\Om)$ the set of functions of bounded variation in $\Om$ (when $\Om=\R^d$ we simply write $BV$ instead of $BV(\R^d)$). 
We say that a set $E\subset \R^d$ is of finite perimeter if its characteristic function $\chi_E$ is of bounded variation and denote its perimeter in an open set $\Om$ by $P(E,\Om):=\int_{\Om} |D \chi_E|$, and write simply $P(E)$ when $\Om=\R^d$.
\end{definition} 

\begin{definition}
Let $E$ be a set of finite perimeter and let $t\in [0;1]$. We define
\[
E^{(t)}:= \left\{ x \in \R^d \,:\, \lim_{r\downarrow 0} \frac{|E\cap B_r(x)|}{|B_r(x)|}=t\right\}.
\]
We denote by $\partial E:= \left(E^{(0)}\cup E^{(1)}\right)^c$ the measure theoretical boundary of $E$.
Let $\supp(|D \chi_E|)$ be the support of the measure $|D \chi_E|$: 
we define the reduced boundary of $E$ by:
\[
\partial^* E:= \left\{ x \in \supp(|D \chi_E|) \,: \, \nu^E(x):= \lim_{r \downarrow 0} \frac{ D \chi_E (B_r(x))}{|D \chi_E|(B_r(x))} \; \textrm{exists and } \; |\nu^E(x)|=1 \right\}\subset E^{\left(\frac 1 2\right)}.
\] 
The vector $\nu^E(x)$ is the measure theoretical inward normal to the set $E$. 
\end{definition} 
\begin{proposition}
If $E$ is a set of finite perimeter then $D\chi_E=\nu^E\  \HH \LL\partial^* E$, $P(E)=\HH(\partial^* E)$ and $\HH(\partial E \setminus \partial^* E)=0$.
\end{proposition}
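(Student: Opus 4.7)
This is the classical De Giorgi--Federer structure theorem for sets of finite perimeter, so any plan will have to reconstruct (or cite) a substantial piece of geometric measure theory. The plan is to proceed via blow-up analysis at points of the reduced boundary, identify the blow-up limits as half-spaces, and then use covering/density arguments to convert this pointwise information into the measure-theoretic identities claimed.

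First I would fix $x_0 \in \partial^* E$ and study the rescaled sets $E_r := (E-x_0)/r$. Standard BV compactness, together with the perimeter bound $P(E_r, B_R) = r^{1-d} |D\chi_E|(B_{rR}(x_0))$ (which is uniformly controlled once one proves the one-sided density estimate $|D\chi_E|(B_\rho(x_0)) \geq c \rho^{d-1}$ via the relative isoperimetric inequality), gives convergence of $\chi_{E_r}$ in $L^1_{\mathrm{loc}}$ along subsequences to some $\chi_F$. The definition of the reduced boundary forces $D\chi_F/|D\chi_F| \equiv \nu^E(x_0)$ on every ball, and one concludes that $F$ must be the half-space $\{y\cdot \nu^E(x_0) \geq 0\}$. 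Uniqueness of the limit then upgrades subsequential convergence to full convergence and yields, in particular, the density relations $|E\cap B_r(x_0)|/|B_r(x_0)| \to 1/2$ and $|D\chi_E|(B_r(x_0))/(\omega_{d-1} r^{d-1}) \to 1$.

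The second step converts these blow-up densities into measure equalities. The limit $\lim_{r\downarrow 0} |D\chi_E|(B_r(x_0))/(\omega_{d-1} r^{d-1}) = 1$ at every $x_0 \in \partial^* E$ is precisely the hypothesis needed for a classical Vitali-type comparison lemma (see \cite{AFP}) which gives $|D\chi_E|\LL \partial^* E = \HH^{d-1}\LL \partial^* E$. Combined with the fact that $|D\chi_E|$ is concentrated on its support and, by the very definition of $\partial^* E$ and the Besicovitch differentiation theorem, $|D\chi_E|$ gives no mass to $\supp(|D\chi_E|)\setminus \partial^*E$, we obtain $|D\chi_E| = \HH^{d-1}\LL \partial^* E$. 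The Radon--Nikodym identity $\nu^E = D\chi_E/|D\chi_E|$ then yields the vector-valued formula $D\chi_E = \nu^E\,\HH^{d-1}\LL \partial^* E$ and, taking total variation, $P(E)=\HH^{d-1}(\partial^* E)$.

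The genuinely delicate point, and the main obstacle, is the final claim $\HH^{d-1}(\partial E \setminus \partial^* E) = 0$ (Federer's theorem). Points of $\partial E$ are by definition points of density different from $0$ and $1$; the task is to show that the set of such points which fail the limit defining $\nu^E$ is $\HH^{d-1}$-negligible. I would argue by contradiction: if this exceptional set had positive $\HH^{d-1}$-measure, the relative isoperimetric inequality applied in balls around its points would give a lower bound $|D\chi_E|(B_r(x))\ge c r^{d-1}$ there, and a Vitali covering together with the density characterization from the blow-up analysis would contradict $|D\chi_E|$ being concentrated on $\partial^* E$. This argument, together with the density-$1/2$ consequence of the half-space blow-up (yielding $\partial^* E \subset E^{(1/2)}$), closes the proof.
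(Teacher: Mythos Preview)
Your outline is a faithful sketch of the standard De Giorgi--Federer argument (blow-up to a half-space at reduced boundary points, density identity $|D\chi_E|(B_r(x_0))/(\omega_{d-1}r^{d-1})\to 1$, measure comparison to get $|D\chi_E|=\HH\LL\partial^*E$, and finally the relative isoperimetric inequality plus a Vitali $5r$-covering to kill $\HH(\partial E\setminus\partial^*E)$), and I do not see a genuine gap in the strategy. One small point worth tightening is the last step: the way you phrase it (``would contradict $|D\chi_E|$ being concentrated on $\partial^*E$'') hides the actual mechanism. What you need is the general fact that if $\mu$ is Radon, $S$ is Borel with $\mu(S)=0$, and $\Theta^{*,d-1}(\mu,x)\ge c>0$ for every $x\in S$, then $\HH(S)=0$; this is proved by covering $S$ with an open set $U$ of small $\mu$-measure, applying the $5r$-lemma to balls $B_r(x)\subset U$ with $\mu(B_r(x))\ge c'\,r^{d-1}$, and bounding $\mathcal H^{d-1}_\delta(S)$ by $C\mu(U)/c$. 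Stating this explicitly would make the final paragraph airtight.

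That said, you should be aware that the paper does not prove this proposition at all: it is recorded in the Preliminaries section as a classical structural fact about sets of finite perimeter, with the implicit reference being \cite{AFP} (where the full proof along exactly the lines you sketch is carried out). So there is no ``paper's own proof'' to compare against; your proposal simply reconstructs the textbook argument that the authors take for granted.
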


\begin{definition}
 We say that $x$ is an approximate jump point of $u\in BV(\Om)$ 
if there exist  $\xi \in \Sp^{d-1}$ 
and distinct $a, b \in \R$
 such that
\[ \lim_{\rho \to 0} \frac{1}{|B_\rho^+(x,\xi)|} \int_{B_\rho^+(x,\xi)} |u(y)-a| \ dy =0 \quad \textrm{ and } \quad  \lim_{\rho \to 0} \frac{1}{|B_\rho^-(x,\xi)|} \int_{B_\rho^-(x,\xi)} |u(y)-b| \ dy =0,\]
where $B_\rho^\pm(x,\xi):= \{ y \in B_\rho(x) : \pm (y-x)\cdot{\xi}>0 \}.$ Up to a permutation of $a$ and $b$ and a change of sign of $\xi$, this characterize the triplet $(a,b,\xi)$ which is then denoted by $(u^+,u^-, \nu_u)$. The set of approximate jump points is denoted by $J_u$. 
\end{definition}

The following proposition can be found in \cite[Proposition 3.92]{AFP}.
\begin{proposition}\label{Ju}
 Let $u\in BV(\Om)$. Then, defining
\[\Theta_u:=\{ x\in \Om \, : \, \liminf_{\rho \to 0} \rho^{1-d} |Du|(B_\rho(x))>0\},\]
there holds $J_u\subset \Theta_u$ and $\H^{d-1}(\Theta_u \setminus J_u)=0$.
\end{proposition}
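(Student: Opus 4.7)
The plan is to prove the two inclusions separately. For $J_u\subset\Theta_u$, I would fix $x\in J_u$ with values $u^\pm(x)$, jump direction $\nu=\nu_u(x)$, and set $\delta:=|u^+(x)-u^-(x)|>0$. The natural approach is one-dimensional slicing in the direction $\nu$: by the $BV$ slicing formula one has $|Du|(B_\rho(x))\ge \int_{\nu^\perp}|Du_y|(B_\rho(x)\cap\ell_y)\,d\H^{d-1}(y)$, where $u_y$ denotes the 1D restriction of $u$ to the line $\ell_y$ in direction $\nu$ through $y$. Fubini combined with the convergence of the half-ball averages to $u^\pm(x)$ yields, along a subsequence $\rho_n\to 0$, that for $\H^{d-1}$-a.e. $y$ in the transverse disk of radius $\rho_n$ the 1D averages of $u_y$ on the two halves of the chord tend to $u^\pm(x)$, so that $|Du_y|$ on that chord is at least $\delta-o(1)$. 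Integrating over the transverse disk of area $\omega_{d-1}\rho_n^{d-1}(1+o(1))$ gives $\liminf_{\rho\to 0}\rho^{1-d}|Du|(B_\rho(x))\ge \omega_{d-1}\delta>0$, as desired.

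For the more delicate inclusion $\H^{d-1}(\Theta_u\setminus J_u)=0$, I would use the canonical decomposition $Du=\nabla u\,\mathcal{L}^d+D^c u+D^j u$, with $D^j u=(u^+-u^-)\nu_u\,\H^{d-1}\restr J_u$, and prove that off $J_u$ each piece has zero $(d-1)$-upper density $\H^{d-1}$-a.e. The workhorse is the classical density estimate for Radon measures: if $\mu$ is a Radon measure on $\Om$ and $A$ is a Borel set with $\Theta^{*d-1}(\mu,x)\ge t$ on $A$, then $\H^{d-1}(A)\le C\mu(A)/t$. For the jump part this is immediate since $|D^j u|(\Om\setminus J_u)=0$. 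For $|D^a u|=|\nabla u|\,\mathcal{L}^d$, the sets $\{\Theta^{*d-1}(|D^a u|,\cdot)\ge 1/k\}$ have $\sigma$-finite $\H^{d-1}$-measure hence zero Lebesgue measure, and then $|D^a u|\ll\mathcal{L}^d$ forces them to have zero $|D^a u|$-measure, and reapplying the density estimate gives zero $\H^{d-1}$-measure.

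The main obstacle is the Cantor part. To repeat the previous bootstrap one needs the fact that $|D^c u|$ vanishes on any Borel set of $\sigma$-finite $\H^{d-1}$-measure; this is part of the fine structure theorem for $BV$ functions and is precisely the ingredient preventing a diffuse singular component of $Du$ from mimicking jump behavior. Granting it, the same argument as for $|D^a u|$ applied to $|D^c u|$ shows that $\{\Theta^{*d-1}(|D^c u|,\cdot)>0\}$ is $\H^{d-1}$-negligible. Combining the three contributions, $\limsup_{\rho\to 0}\rho^{1-d}|Du|(B_\rho(x))=0$ for $\H^{d-1}$-a.e. $x\in\Om\setminus J_u$, in direct contradiction with the defining property of $\Theta_u$ whenever such $x$ also lies in $\Theta_u$, forcing $\H^{d-1}(\Theta_u\setminus J_u)=0$.
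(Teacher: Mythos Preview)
The paper does not actually prove this proposition: it is quoted verbatim from \cite[Proposition~3.92]{AFP} and simply cited, so there is no ``paper's proof'' to match. Your argument is essentially the standard one from that reference and is sound, with one cosmetic slip and one point worth flagging.

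For $J_u\subset\Theta_u$, your slicing route works, but the phrase ``along a subsequence $\rho_n\to 0$'' is misleading: to bound a $\liminf$ from below you need the estimate for all small $\rho$, not just along a subsequence. In fact no subsequence is needed: from the $L^1$ convergence of the half-ball averages, Chebyshev gives that the proportion of transverse points $y$ whose half-chord averages are within $\varepsilon$ of $u^\pm(x)$ tends to $1$ as $\rho\to 0$, and integrating the resulting lower bound $|Du_y|\ge \delta-2\varepsilon$ over that good set yields $\liminf_{\rho\to 0}\rho^{1-d}|Du|(B_\rho(x))\ge \omega_{d-1}\delta$. (Alternatively, and more quickly, the rescalings $u(x+\rho\,\cdot)$ converge in $L^1_{loc}$ to the two-valued step function, and lower semicontinuity of the total variation gives the same bound in one line.)

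For $\H^{d-1}(\Theta_u\setminus J_u)=0$, your bootstrap via the density comparison $\H^{d-1}(A)\le C\mu(A)/t$ whenever $\Theta^{*\,d-1}(\mu,\cdot)\ge t$ on $A$ is exactly the right mechanism, applied separately to $|D^a u|$, $|D^c u|$, $|D^j u|$. The only nontrivial external input is the one you correctly isolate: that $|D^c u|$ vanishes on every Borel set of $\sigma$-finite $\H^{d-1}$-measure. This is established in \cite{AFP} as part of the decomposition of $Du$ \emph{prior} to Proposition~3.92, so invoking it here is legitimate and not circular. With that granted, your three-term argument is complete.
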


\subsection{Anisotropies}
Let $F(x,p):\Om \times \R^d\to [0,+\infty)$ be a continuous functions, which is 
convex and positively one-homogeneous in the second variable:
$F(x,\lambda p)=\lambda F(x,p)$ for all $\lambda>0,x,p$;
and such that there exists $c_0>0$ with
\[c_0 |p|\le F(x,p)\le \frac{1}{c_0} |p| \qquad \forall (x,p)\in \R^d\times \R^d.\]
We say that $F$ is uniformly elliptic if for some $\delta>0$, the function 
$p\mapsto F(p)-\delta |p|$ is still a convex function.  We define the polar function of $F$ by
\begin{equation}\label{polar}
F^\circ(x,z):=\sup_{\{F(x,p)\le1\}} z\cdot p
\end{equation}
so that $(F^\circ)^\circ=F$.  It is easy to check that 
$[F(x,\cdot)^2/2]^*= F^\circ(x,\cdot)^2/2$, where as before the $^*$ denotes
the convex conjugate with respect to the second variable.
In particular, if differentiable,
$F(x,\cdot)\nabla_p F(x,\cdot)$ and $F^\circ(x,\cdot)\nabla_z F^\circ(x,\cdot)$ 
are inverse monotone operators.
Also, one has that $F^*(x,z)=0$ if and only if $F^\circ(x,z) \le 1$,
and $F^*(x,z)=+\infty$ else. 

If $F(x,\cdot)$ is differentiable then, for every $p\in \R^d$, 
\[
F(x,p)=p\cdot \nabla_p F(x,p) \qquad {\rm (Euler's\ identity)}
\] 
and
\[ z\in \{F^\circ(x,\cdot)\le 1\} \textrm{ with } p\cdot z=F(x,p)\ \Longleftrightarrow \  z=\nabla_p F(x,p).\]
If $F$ is elliptic and of class $\mathcal{C}^2(\R^d \times \R^d\setminus\{0\})$, then $F^\circ$ is also elliptic and $\mathcal{C}^2(\R^d \times \R^d\setminus\{0\})$. We will then say that $F$ is a smooth elliptic anisotropy. 
Observe that, in this case, the function $F^2/2$ is also uniformly $\delta^2$-convex
(this follows from the inequalities $D^2F(x,p)\ge \delta/|p|(I-p\otimes p/|p|^2)$
and $F(x,p)\ge \delta |p|$). In particular, for every $x,y,z \in \R^d$, there holds 
\begin{equation}\label{strictconvF}F^2(x,y)-F^2(x,z)\ge 2\left(F(x,z)\nabla_p F(x,z)\right)\cdot (y-z) + \delta^2|y-z|^2,
\end{equation}
and a similar inequality holds for $F^\circ$. 
We refer to \cite{schneider} for general results on convex norms and convex bodies.

\subsection{Pairings between measures and bounded functions}
Following \cite{Anzelotti} we define a generalized trace $[z,Du]$ 
for functions $u$ with bounded variation and bounded vector fields $z$ with divergence in $L^d$.
\begin{definition}[Anzelotti~\cite{Anzelotti}]
Let $\Om$ be an open set with Lipschitz boundary,  $u\in BV(\Om)$ and
 $z \in L^{\infty}(\Om,\R^d)$ with $\Div z\in L^d(\Om)$. 
We define the distribution $[z,Du]$ by 
 \[ \langle [z,Du], \psi \rangle= -\int_\Om u \,\psi\, \Div z \, dx - \int_\Om u \,  z \cdot \nabla \psi  \, dx
 \qquad \forall \psi \in \mathcal{C}^\infty_c(\Om). 
 \]
\end{definition}

\begin{proposition}[Anzelotti~\cite{Anzelotti}]
The distribution $[z,Du]$ is a bounded Radon measure on $\Om$ and if $\nu$ is the inward unit normal to $\Om$, there exists a function $[z,\nu]\in L^\infty(\partial \Om)$ such that the generalized Green's formula holds,
 \[ \int_\Om [z,Du]=- \int_\Om u \Div z \, dx -\int_{\partial \Om} [z, \nu] u\,d \HH.\]
 The function $[z,\nu]$ is the generalized (inward) normal trace of $z$ on $\partial \Om$. 
\end{proposition}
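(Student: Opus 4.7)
The plan is to establish in succession that $[z,Du]$ is a bounded Radon measure, that the normal trace $[z,\nu]$ can be constructed as an $L^\infty$ function on $\partial\Om$, and finally that the generalized Green's formula follows by approximation.

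First, I would prove that $[z,Du]$ is a bounded Radon measure by approximating $u$. Since $\partial\Om$ is Lipschitz, there exist $u_\eps\in C^\infty(\Om)\cap BV(\Om)$ with $u_\eps\to u$ in $L^{d/(d-1)}(\Om)$ and $|Du_\eps|(\Om)\to|Du|(\Om)$ (strict convergence). For smooth $u_\eps$, the distribution $[z,Du_\eps]$ coincides with the $L^1$ function $z\cdot\nabla u_\eps$. Using $\Div z\in L^d$ and $u_\eps\to u$ in $L^{d/(d-1)}$, the definition passes to the limit to give
\[
\langle [z,Du],\psi\rangle\ =\ \lim_{\eps\to 0}\int_\Om \psi\, z\cdot\nabla u_\eps\,dx
\qquad\forall \psi\in C_c^\infty(\Om),
\]
and the bound $\bigl|\int \psi\, z\cdot\nabla u_\eps\bigr|\le\|z\|_\infty\|\psi\|_\infty |Du_\eps|(\Om)$ survives in the limit. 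Thus $|\langle[z,Du],\psi\rangle|\le\|z\|_\infty\|\psi\|_\infty|Du|(\Om)$, and Riesz representation provides a bounded Radon measure, with total variation controlled by $\|z\|_\infty|Du|(\Om)$.

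Next I would construct $[z,\nu]$ by duality. For $\phi\in C^1(\partial\Om)$ pick any extension $w\in C^1(\ov\Om)$ and set
\[
\langle[z,\nu],\phi\rangle\ :=\ -\int_\Om w\,\Div z\,dx\ -\ \int_\Om z\cdot\nabla w\,dx.
\]
Density of $C_c^\infty(\Om)$ in $W^{1,1}_0(\Om)\cap L^{d/(d-1)}(\Om)$ shows the value is independent of the extension. For the $L^\infty$ bound I would invoke Gagliardo's extension theorem: for $\phi\in L^\infty(\partial\Om)$ there exists an extension $w$ supported in a thin tubular neighborhood of $\partial\Om$ with $\|w\|_\infty\le\|\phi\|_\infty$ and $\|\nabla w\|_{L^1(\Om)}\lesssim\|\phi\|_{L^1(\partial\Om)}$; Sobolev embedding then yields $\|w\|_{L^{d/(d-1)}(\Om)}\lesssim\|\phi\|_{L^1(\partial\Om)}$. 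Plugging into the formula gives $|\langle[z,\nu],\phi\rangle|\lesssim(\|z\|_\infty+\|\Div z\|_{L^d})\,\|\phi\|_{L^1(\partial\Om)}$, so $[z,\nu]$ extends to an element of $L^\infty(\partial\Om)=(L^1(\partial\Om))^*$.

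Green's formula is then classical for $u\in C^1(\ov\Om)$ by the very definition of $[z,\nu]$. For $u\in BV(\Om)$ I would take a cutoff $\psi_n\in C_c^\infty(\Om)$ of the form $\psi_n(x)=\eta(n\,\dist(x,\partial\Om))$, with $\eta(0)=0$ and $\eta\equiv 1$ on $[1,\infty)$, test the distributional definition against $\psi_n$, and let $n\to\infty$. The left side $\int_\Om\psi_n\,d[z,Du]$ converges to $\int_\Om d[z,Du]$ by dominated convergence on the bounded measure $[z,Du]$, and $-\int u\psi_n\,\Div z\,dx$ tends to $-\int u\,\Div z\,dx$ similarly. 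The delicate step, which is the main obstacle, is to identify the limit of $-\int u\, z\cdot\nabla\psi_n\,dx$ as $-\int_{\partial\Om}[z,\nu]\, u\,d\HH$. Since $z$ is only $L^\infty$ it has no pointwise trace, so this must be done weakly, by treating $u\psi_n$ as a test function against the already-constructed $[z,\nu]$ after first approximating $u$ by functions $u_\eps\in C^\infty(\ov\Om)\cap BV(\Om)$ whose boundary traces converge in $L^1(\partial\Om)$ (a standard consequence of strict $BV$ convergence on Lipschitz domains) and then exchanging the limits in $\eps$ and $n$. Combining the three limits yields the generalized Green's formula.
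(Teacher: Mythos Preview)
The paper does not give its own proof of this proposition: it is stated as a known result and attributed to Anzellotti~\cite{Anzelotti} without any argument, so there is nothing to compare your attempt against in the paper itself.

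For what it is worth, your outline is essentially a faithful reconstruction of Anzellotti's original scheme: strict $BV$-approximation to show that $[z,Du]$ is a measure dominated by $\|z\|_\infty|Du|$, Gagliardo's trace/extension theorem to build $[z,\nu]\in L^\infty(\partial\Om)$ as a bounded functional on $L^1(\partial\Om)$, and a cutoff-plus-trace argument to obtain the global Green formula. The one place where your sketch is genuinely incomplete is the last step, which you flag yourself: the identification of $\lim_n\bigl(-\int_\Om u\,z\cdot\nabla\psi_n\,dx\bigr)$ with the boundary integral requires more than an informal ``exchange of limits''. In Anzellotti's argument this is handled by first proving the formula for $u\in W^{1,1}(\Om)$ (where it follows directly from the definition of $[z,\nu]$), and then passing to $u\in BV(\Om)$ via a strict approximation $u_\eps\in C^\infty(\overline\Om)\cap BV(\Om)$ for which the \emph{boundary traces} converge in $L^1(\partial\Om)$ and $[z,Du_\eps]\stackrel{*}{\rightharpoonup}[z,Du]$ as measures (the latter being the nontrivial ingredient). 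Your double-limit description gestures at this but does not quite isolate the needed convergence; once that is made precise, the argument is complete.
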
 
  
Given $z \in L^{\infty}(\Om;\R^d)$, with $\Div z\in L^d(\Om)$, we can
also define the generalized trace of $z$ on $\partial E$, where $E$ is a set of locally finite perimeter. 
Indeed, for every bounded open set $\Om$ with Lipschitz boundary, we can define as above the measure $[z,D\chi_E]$ on $\Om$. 
Since this measure is absolutely continuous with respect to $|D\chi_E|= \HH \restr \partial^* E$ we have 
\[
[z,D\chi_E]= \psi_z(x) \HH\restr \partial^* E
\]
with $\psi_z \in L^{\infty}(\partial^* E)$ independent of $\Om$. 
We denote by $[z,\nu^E]:=\psi_z$ the generalized (inward) normal trace of $z$ on $\partial E$. 
If $E$ is a bounded set of finite perimeter, by taking $\Om$ strictly containing $E$,
we have the generalized Gauss-Green Formula 
\[\int_E \Div z \, dx =-\int_{\partial^* E} [z,\nu^E] d \HH.\]
Anzellotti proved the following alternative definition of $[z,\nu^E]$ \cite{anzeltrace,anzeleuler}

\begin{proposition}[Anzelotti~\cite{anzeltrace,anzeleuler}]
\label{convcylinder}
Let $(x,\alpha)\in \R^d\times \R^d\setminus\{0\}$.
For any $r>0$, $\rho>0$ we let 
$$
C_{r,\rho}(x,\alpha):=\{ \xi \in \R^d \, : \,
\left|(\xi-x)\cdot \alpha\right| <r,
\left|(\xi-x)-[(\xi-x)\cdot \alpha] \alpha\right|<\rho\}.
$$
There holds
\[ [z,\alpha](x) =\lim_{\rho\to 0}\lim_{r\to 0} \  \frac{1}{2r \omega_{d-1} \rho^{d-1}} \int_{C_{r,\rho}(x,\alpha)} z \cdot \alpha\, dy \]
where $\omega_{d-1}$ is the volume of the unit ball in $\R^{d-1}$.
\end{proposition}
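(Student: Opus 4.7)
\emph{Plan.} The identity is invariant under rescaling of $\alpha$, so we may assume $|\alpha|=1$, and after a rigid motion $x=0$. Write $y=y''+s\alpha$ with $y''\in\alpha^\perp$, $s\in\R$; then $C_{r,\rho}=D_\rho\times(-r,r)$, where $D_\rho\subset\alpha^\perp$ is the open ball of radius $\rho$. For $s\in\R$ let $H_s=\{y\cdot\alpha=s\}$ and $E_s=\{y\cdot\alpha>s\}$: since $\nu^{E_s}\equiv\alpha$ on $H_s$, the trace $[z,\alpha]\in L^\infty(H_s)$ is defined, with $\|[z,\alpha]\|_{L^\infty(H_s)}\le\|z\|_\infty$. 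Introduce
\[
g(s)\ :=\ \int_{D_\rho\times\{s\}}[z,\alpha]\,d\HH.
\]
The proposition will follow from three ingredients: (i) continuity of $g$; (ii) the Fubini-type identity $\int_{C_{r,\rho}}z\cdot\alpha\,dy=\int_{-r}^r g(s)\,ds$; and (iii) Lebesgue--Besicovitch differentiation of $[z,\alpha]$ on $H_0$.

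\emph{Step (i).} Apply the generalized Gauss--Green formula to the bounded slab $S=D_\rho\times(s_2,s_1)$, $s_1>s_2$. The inward normals to $S$ are $-\alpha$ on the top disk, $+\alpha$ on the bottom disk, and $-\nu_{\textup{lat}}$ on $\partial D_\rho\times(s_2,s_1)$, where $\nu_{\textup{lat}}$ is the outward radial unit vector in $\alpha^\perp$. Using the identity $[z,-\alpha]=-[z,\alpha]$ (which follows from $D\chi_{E_s^c}=-D\chi_{E_s}$), and similarly for $\nu_{\textup{lat}}$, we get
\begin{equation*}
g(s_1)-g(s_2)\ =\ \int_S\Div z\,dy\ -\ \int_{\partial D_\rho\times(s_2,s_1)}[z,\nu_{\textup{lat}}]\,d\HH,
\end{equation*}
which is bounded by $\|\Div z\|_{L^d}|S|^{(d-1)/d}+\|z\|_\infty(d-1)\omega_{d-1}\rho^{d-2}(s_1-s_2)$ and vanishes as $s_1\to s_2$. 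Hence $g$ is continuous on $\R$, and in particular $(2r)^{-1}\int_{-r}^r g(s)\,ds\to g(0)$ as $r\to 0$.

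\emph{Limit and main obstacle.} Granting (ii) and combining with Step (i), dividing by $\omega_{d-1}\rho^{d-1}$ yields
\begin{equation*}
\lim_{r\to 0}\frac{1}{2r\,\omega_{d-1}\rho^{d-1}}\int_{C_{r,\rho}}z\cdot\alpha\,dy\ =\ \frac{1}{\omega_{d-1}\rho^{d-1}}\int_{D_\rho\times\{0\}}[z,\alpha]\,d\HH.
\end{equation*}
Since $[z,\alpha]\in L^\infty(H_0)$, Lebesgue--Besicovitch differentiation on the $(d-1)$-dimensional hyperplane $H_0$ shows that the right-hand side converges, as $\rho\to 0$, to $[z,\alpha](x)$ for $\HH$-a.e.~$x\in H_0$, which is the claim. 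The main difficulty is (ii): identifying, for $\mathcal{L}^1$-a.e.~$s$, the trace $[z,\alpha]$ on $H_s$ with the Fubini slice $(z\cdot\alpha)|_{H_s}$. A standard route is to mollify $z_\e=z*\eta_\e$, so that $[z_\e,\alpha]$ reduces to the classical restriction $z_\e\cdot\alpha$, and pass to the limit $\e\to 0$ using continuity of Anzellotti's pairing under $z_\e\to z$ in $L^1_{\textup{loc}}$ together with $\Div z_\e\to\Div z$ in $L^d$. Alternatively, one can bypass (ii) entirely by testing the definition of $[z,D\chi_{E_s}]$ against $\varphi(y)=\psi(y'')\theta(y\cdot\alpha)$ with $\theta$ a smooth approximation of $\chi_{(-r,r)}$ and integrating over $s$; the derivative $\theta'$ then reconstructs the cylinder integral of $z\cdot\alpha$ in the limit.
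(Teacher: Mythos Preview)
The paper does not prove this proposition: it is quoted from Anzellotti's work (the references \texttt{anzeltrace}, \texttt{anzeleuler}) and stated without argument, so there is no in-paper proof to compare against.

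Your outline is essentially Anzellotti's original strategy and is architecturally sound: normalize $\alpha$, introduce the disk-trace function $g(s)$, prove its continuity via Gauss--Green on thin slabs, identify the cylinder integral with $\int_{-r}^r g$, and finish with Lebesgue differentiation in the transverse directions. Step~(i) and the final differentiation step are correct as written. One small omission in Step~(i): you tacitly use that the trace of $z$ on $H_{s_1}$ taken from inside the slab $S$ coincides with the trace taken from inside the half-space $E_{s_1}$; this is true precisely because $\Div z\in L^d$ has no singular part concentrated on $H_{s_1}$, but it deserves one sentence.

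The genuine gap is Step~(ii), which you flag but do not close. The mollification route does work, but not quite as you phrase it: you need $g_\e(s)\to g(s)$ for every (or a.e.)~$s$, and this does not follow from $z_\e\to z$ in $L^1_{\textup{loc}}$ alone. What makes it go through is the weak-$*$ stability of Anzellotti's trace: if $z_\e\stackrel{*}{\rightharpoonup}z$ in $L^\infty$ and $\Div z_\e\to\Div z$ in $L^d$, then $[z_\e,\alpha]\stackrel{*}{\rightharpoonup}[z,\alpha]$ in $L^\infty(H_s)$, hence $g_\e(s)\to g(s)$ for each fixed $s$; dominated convergence (the $g_\e$ are uniformly bounded by $\|z\|_\infty|D_\rho|$) then gives $\int_{-r}^r g_\e\to\int_{-r}^r g$, while $\int_{-r}^r g_\e=\int_{C_{r,\rho}}z_\e\cdot\alpha\to\int_{C_{r,\rho}}z\cdot\alpha$ by $L^1$ convergence. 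Once this is said, your argument is complete.
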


% We notice that there has been a lot of interest in  defining the trace of bounded vector fields with divergence a bounded measure, 
% on boundaries of sets of finite perimeters \cite{ACM,CTZ}, since this is related to the study of conservations laws.
\section{The subdifferential of anisotropic total variations}

\subsection{Characterization of the subdifferential}

The  following characterization of the  subdifferential of $J$ is classical and readily follows for example from the representation
formula~\cite[(4.19)]{DMBou}.
\begin{proposition}\label{defcalibration}
Let $F$ be a smooth elliptic anisotropy and $g\in L^d(\Om)$ then $u$ is a local minimizer of \eqref{minsubgrad} if and only if there exists $z\in L^\infty(\Om)$ with $\Div z =g$, $F^*(x,z(x))=0 $ a.e. and 
\[ [z,Du]= F(x,Du),\]
where the equality holds in the sense of measures.
Moreover, for every $t\in \R$, for the set $E=\{u>t\}$ there holds $[z,\nu^E]=F(x,\nu^E) $ $\HH$-a.e. on $\partial E$. We will say that such a vector field is a calibration of the set $E$ for the minimum problem~\eqref{geomfun}.
\end{proposition}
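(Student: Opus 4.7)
Both implications rest on the measure-level Fenchel inequality $[z,Dv]\le F(x,Dv)$, valid for every $v\in BV(\Om)$ and every $z\in L^\infty(\Om;\R^d)$ with $F^\circ(x,z)\le 1$ a.e.\ and $\Div z\in L^d(\Om)$. It is obtained from the pointwise Fenchel bound $z\cdot p\le F(x,p)$ by mollifying $v$, pairing with a smooth version of $z$, and passing to the limit using lower semicontinuity of the Anzelotti pairing. For the ``if'' direction, given such $z$ and a perturbation $v$ with $v-u$ compactly supported in $\Om$, Anzelotti's generalized Green formula gives $\int_\Om d[z,D(v-u)]=-\int_\Om(v-u)\Div z\,dx=-\int_\Om g(v-u)\,dx$, the boundary contributions cancelling because $v=u$ near $\partial\Om$. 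Combined with $[z,Dv]\le F(x,Dv)$ and the hypothesis $[z,Du]=F(x,Du)$, this yields $J(v)-J(u)\ge -\int_\Om g(v-u)\,dx$, which (up to the sign convention for $g$) is exactly the local minimality condition for~\eqref{minsubgrad}.

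For the converse, I would apply Fenchel--Rockafellar duality to the representation formula cited from~\cite[(4.19)]{DMBou}, namely $J(u)=\sup\bigl\{-\int_\Om u\,\Div z\,dx:F^\circ(x,z)\le 1,\ \Div z\in L^d\bigr\}$. If $u$ minimizes~\eqref{minsubgrad}, strong duality produces a maximizer $z$ in the dual problem with the prescribed divergence, and equality of the primal and dual values reads $\int_\Om d[z,Du]=\int_\Om F(x,Du)$. Combined with the measure inequality $[z,Du]\le F(x,Du)$, this forces the equality $[z,Du]=F(x,Du)$ as measures.

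For the level-set refinement, I would apply a coarea-type decomposition to both sides of $[z,Du]=F(x,Du)$. Writing $E_s:=\{u>s\}$, the standard BV coarea formula gives $F(x,Du)=\int_\R F(x,\nu^{E_s})\,\HH\restr\partial^*E_s\,ds$, and the analogous identity $[z,Du]=\int_\R [z,\nu^{E_s}]\,\HH\restr\partial^*E_s\,ds$ holds for the Anzelotti pairing. Since $F^\circ(x,z)\le 1$ implies the pointwise bound $[z,\nu^{E_s}]\le F(x,\nu^{E_s})$ $\HH$-a.e.\ on $\partial^*E_s$, equality of the integrals in~$s$ forces $[z,\nu^{E_s}]=F(x,\nu^{E_s})$ $\HH$-a.e.\ on $\partial^*E_s$ for a.e.\ $s$. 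To extend this to every $t\in\R$ (and to $\{u\ge t\}$), one chooses $t_n\to t$ from the full-measure ``good'' set and uses continuity of the normal trace under the $L^1$-convergence $\chi_{\{u>t_n\}}\to\chi_{\{u>t\}}$. The main obstacle I anticipate is justifying the coarea decomposition of $[z,Du]$: while the decomposition of $F(x,Du)$ is classical, for the Anzelotti pairing one must either approximate $u$ by smooth functions and pass to the limit using the bounds on $z$ and $\Div z\in L^d$, or verify the decomposition directly from the integration-by-parts definition by a slicing argument in the spirit of Proposition~\ref{convcylinder}.
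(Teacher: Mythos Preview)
Your argument is correct and is precisely the ``classical'' proof the paper alludes to: the paper does not give a detailed proof at all, merely citing the representation formula~\cite[(4.19)]{DMBou}, and your Fenchel--Rockafellar duality argument is exactly what underlies that formula. The one point worth sharpening is your passage from a.e.~$t$ to every~$t$: the phrase ``continuity of the normal trace under $L^1$-convergence'' is too strong as stated (the trace need not be continuous), but what you actually need is only the stability of the \emph{calibration property} under $L^1$-convergence of sets with a fixed field~$z$, which is the content of Proposition~\ref{stabilitycalib} in the paper and follows by lower semicontinuity of the anisotropic perimeter together with the identity $\int_\Om[z,D\chi_{E_n}]=-\int_{E_n}\Div z\,dx\to -\int_E\Div z\,dx$.
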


\begin{remark}\rm
In \cite{Anzelotti}, it is proven that if $z_\rho(x):=\frac{1}{|B_\rho(x)|}\int_{B_\rho(x)} z(y) \ dy$, then $z_\rho\cdot\nu^E$  weakly* converges to $[z, \nu^E]$ in $L^\infty_{loc}(\HH\LL\partial^* E)$. Using \eqref{strictconvF} it is then possible to prove that if $z$ calibrates $E$ then $z_\rho$ converges to $\nabla_p F(x,\nu^E)$ in $L^2(\HH\LL \partial^*E)$ yielding that up to a subsequence,
$z_{\phi(\rho)}$ converges $\HH$-a.e. to $\nabla_p F(x, \nu^E)$. Unfortunately this is still a very weak statement since it is a priori impossible to recover from this the convergence of the full sequence $z_\rho$.
\end{remark}

The main question we want to investigate now is whether we can give a classical meaning to $[z,\nu^E]$ (that is understand if $[z,\nu^E]=z\cdot \nu^E$). We observe that a priori the value of $z$ is not well defined on $\partial E$ which has zero Lebesgue measure.

We let $S:=\supp(Du)\subset\Om$ be the support of the measure
$Du$, that is, the smallest closed set 
in $\Om$ such that $|Du|(\Om\setminus S)=0$. We will show that essentially
in $S$, $z$ is well-defined, as soon as $g\in L^d(\Om)$.

The next result is classical, for a proof we refer to \cite{Massari,GoMa}.
\begin{lemma}[Density estimate]\label{lemdens}
There exists $\rho_0>0$ (depending on $g\in L^d(\Om)$)
and a constant $\gamma>0$ (which depends only on $d$), such that for
any $B_\rho(x)\subset \Om$ %\problem{This can certainly be removed}
with $\rho\le\rho_0$, and any level set $E$ of $u$ (that is,
$E\in \left\{\{u>s\},\{u\ge s\},\{u<s\},\{u\le s\},\, s\in \R\right\}$), if
$|B_\rho(x)\cap E|<\gamma |B_\rho(x)|$ 
then $|B_{\rho/2}(x)\cap E|=0$.
As a consequence, $E^{0}$ and $E^{1}$ are open, $\partial E$ is the topological
boundary of $E^{1}$, 
and (possibly changing slightly $\gamma$)
if $x\in \partial E$, then $\H^{d-1}(\partial E\cap B_\rho(x))
\ge \gamma \rho^{d-1}$.
\end{lemma}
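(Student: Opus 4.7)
The plan is to exploit the observation recalled after the Coarea Formula: every level set $E$ of $u$ minimizes the geometric functional~\eqref{geomfun}. With this, the classical De Giorgi--Massari density estimate carries over, the only adaptations being to the anisotropic constants and to the $L^d$ forcing term.

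\textbf{Comparison and ODE on the volume.} Set $m(\rho) := |E \cap B_\rho(x)|$; by Fubini $m$ is absolutely continuous with $m'(\rho) = \HH(E^{(1)} \cap \partial B_\rho(x))$ a.e. Comparing $E$ to $E\setminus B_\rho(x)$ in~\eqref{geomfun}, using the bounds $c_0|\nu| \le F(x,\nu) \le c_0^{-1}|\nu|$, and bounding $\int_{E\cap B_\rho(x)}|g|$ by H\"older, I get for a.e.~$\rho$ with $B_\rho(x)\subset\Om$
\[
c_0\, P(E, B_\rho(x)) \;\le\; c_0^{-1}\, m'(\rho) + \|g\|_{L^d(B_\rho(x))}\, m(\rho)^{(d-1)/d}.
\]
Combining $P(E\cap B_\rho(x)) \le P(E, B_\rho(x)) + m'(\rho)$ with the isoperimetric inequality $m(\rho)^{(d-1)/d} \le c_d\, P(E \cap B_\rho(x))$ yields, for constants $C_1, C_2$ depending only on $d$ and $c_0$,
\[
m(\rho)^{(d-1)/d} \;\le\; C_1\, m'(\rho) + C_2\, \|g\|_{L^d(B_\rho(x))}\, m(\rho)^{(d-1)/d}.
\]
By absolute continuity of the $L^d$-norm on small balls I can pick $\rho_0>0$ (depending on $g$) so that $C_2\|g\|_{L^d(B_{\rho_0}(x))}\le\half$; absorbing then gives $m(\rho)^{(d-1)/d} \le 2C_1\, m'(\rho)$ for $\rho \le \rho_0$, which on $\{m>0\}$ reads $\frac{d}{d\rho}(m(\rho)^{1/d}) \ge 1/(2dC_1)$.

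\textbf{Density dichotomy and consequences.} Suppose $m(\rho) < \gamma|B_\rho(x)|$ for some $\rho\le\rho_0$, and, for contradiction, that $m(\rho/2)>0$. Then $m>0$ on $[\rho/2,\rho]$ by monotonicity, and integrating the ODE inequality gives $m(\rho)^{1/d} \ge \rho/(4dC_1)$. Picking $\gamma := (2\omega_d(4dC_1)^d)^{-1}$ (depending only on $d$ and $c_0$) forces a contradiction, so $m(\rho/2)=0$. This immediately gives that $E^{0}$ is open; applying the same estimate to $E^c$ (also a level set of $u$) yields openness of $E^{1}$, and identifies $\partial E$ with the topological boundary of $E^{1}$. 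Finally, for $x\in\partial E$ and $\rho\le\rho_0$, the density estimate applied to both $E$ and $E^c$ gives $\min(m(\rho),|B_\rho(x)|-m(\rho))\ge \gamma|B_\rho(x)|$; the relative isoperimetric inequality in $B_\rho(x)$ converts this to $P(E,B_\rho(x)) \ge c\,\rho^{d-1}$; since $\HH(\partial E \setminus \partial^*E)=0$, this is the claimed lower bound (after possibly rescaling $\gamma$).

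The main technical subtlety I anticipate is passing from $m^{(d-1)/d} \le C\, m'$ to the ODE for $m^{1/d}$: the chain rule only applies where $m > 0$, so one has to argue on connected components of $\{m>0\}\cap(0,\rho_0]$ and use monotonicity of $m$ together with $m(0^+)=0$. Beyond this, the argument is entirely classical, and the anisotropy $F$ enters only through the ellipticity constants $c_0, c_0^{-1}$.
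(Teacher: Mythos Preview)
Your argument is correct. The paper does not actually prove this lemma---it states that the result is classical and refers to~\cite{Massari,GoMa}---and your write-up is precisely the standard De~Giorgi--Massari comparison scheme found in those references, with the anisotropy $F$ entering only through the ellipticity constants $c_0,c_0^{-1}$, exactly as you note. Two very minor remarks: your constant $\gamma$ depends on $c_0$ as well as on $d$ (the paper's phrasing ``depends only on $d$'' implicitly treats the ellipticity constant of $F$ as fixed data); and for the final lower bound on $\H^{d-1}(\partial E\cap B_\rho)$ you only need the inclusion $\partial^*E\subset\partial E$, not the vanishing of $\H^{d-1}(\partial E\setminus\partial^*E)$.
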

This result is not true anymore if $g\not\in L^d(\Om)$~\cite{GoMa}.
If $\partial \Om$ is Lipschitz, it is true up to the boundary.
\medskip

\begin{corollary}
It follows that $u\in L^\infty_{loc}(\Om)$\, and $u\in C(\Om\setminus \Theta_u)$.
\end{corollary}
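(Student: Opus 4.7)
I plan to extract both properties from the density dichotomy in Lemma~\ref{lemdens}, with the coarea formula handling the continuity part.

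\emph{Boundedness.} For $K\Subset\Om$, I would pick $\rho\le\min\{\rho_0,\dist(K,\partial\Om)\}$. Since $u\in BV(\Om)\subset L^{d/(d-1)}(\Om)$, the measure $|\{|u|>s\}|$ tends to zero as $s\to+\infty$. Choosing $s$ large enough that $|\{|u|>s\}|<\gamma|B_\rho|$, Lemma~\ref{lemdens} applied to $E=\{u>s\}$ and $E=\{u<-s\}$ at any $x\in K$ produces $|u|\le s$ a.e.\ on $B_{\rho/2}(x)$; a finite cover of $K$ then delivers the local $L^\infty$ bound.

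\emph{Continuity.} At each $x_0\in\Om\setminus\Theta_u$, I would introduce the essential upper and lower traces
\[
u^+(x_0):=\inf\{s\in\R:x_0\in\{u>s\}^{0}\},\qquad u^-(x_0):=\sup\{s\in\R:x_0\in\{u>s\}^{1}\},
\]
finite by the first step and with $u^-(x_0)\le u^+(x_0)$. The openness of $\{u>s\}^{0}$ and $\{u>s\}^{1}$ (provided by Lemma~\ref{lemdens}) implies that every $s\in(u^-(x_0),u^+(x_0))$ satisfies $x_0\in\partial\{u>s\}$, so the same lemma yields $\HH(\partial\{u>s\}\cap B_\rho(x_0))\ge\gamma\rho^{d-1}$ for all small $\rho$ uniformly in such $s$. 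Using $\HH(\partial E\setminus\partial^*E)=0$ and the coarea formula,
\[
|Du|(B_\rho(x_0))=\int_\R P(\{u>s\},B_\rho(x_0))\,ds\ge\gamma\,(u^+(x_0)-u^-(x_0))\,\rho^{d-1}.
\]
Dividing by $\rho^{d-1}$ and letting $\rho\to 0$ contradicts $x_0\notin\Theta_u$ unless $u^+(x_0)=u^-(x_0)$. Taking this common value as the pointwise representative, a standard neighborhood argument gives continuity: for $y\in B_\rho(x_0)\setminus\Theta_u$ with $\rho$ chosen so that $u\in[u(x_0)-\eps,u(x_0)+\eps]$ a.e.\ on $B_\rho(x_0)$, the same ball is a neighborhood of $y$, forcing $u^\pm(y)$ (and hence $u(y)$) into the same interval.

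The main technical point is the passage from the topological density lower bound $\HH(\partial\{u>s\}\cap B_\rho)\ge\gamma\rho^{d-1}$ to the corresponding bound on $P(\{u>s\},\cdot)$ needed to activate coarea; this is immediate once one invokes $\HH(\partial E\setminus\partial^*E)=0$, already recorded in the preliminaries, and the openness of $E^0,E^1$ which converts the a.e.\ definition of $u^\pm$ into the purely topological statement $x_0\in\partial\{u>s\}$.
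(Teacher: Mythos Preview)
Your argument is correct and, for the continuity part, essentially mirrors the paper's: both combine the perimeter density lower bound of Lemma~\ref{lemdens} with the coarea formula to bound the essential oscillation of $u$ on $B_{\rho/2}(x)$ by a constant times $\rho^{1-d}|Du|(B_\rho(x))$, which forces $u^+(x_0)=u^-(x_0)$ whenever $x_0\notin\Theta_u$. The only genuine difference is in the boundedness step: the paper reads $u\in L^\infty_{loc}$ off the \emph{same} oscillation inequality (finiteness of $|Du|(B_\rho(x))$ bounds $\operatorname{osc}_{B_{\rho/2}(x)}u$), whereas you run a separate Chebyshev/density-dichotomy argument. Both are valid; the paper's route is a touch more economical since one inequality does double duty, while your approach has the mild advantage that the $L^\infty_{loc}$ bound comes with an explicit uniform constant on any compact $K$ depending only on $\|u\|_{L^{d/(d-1)}}$, $\dist(K,\partial\Om)$, and the density parameters.
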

\begin{proof}
 For any ball $B_\rho(x)\subset \Om$ and
 $\inf_{B_{\rho/2}(x)}u<a<b<\sup_{B_{\rho/2}(x)}u$, one has
\[
+\infty\,>\, |Du|(B_\rho(x))\,\ge \,\int_{a}^b P(\{u>s\},B_\rho(x))\,ds
\,\ge\, (b-a)\gamma\left(\frac{\rho}{2}\right)^{d-1}\,,
\]
so that $osc_{B_{\rho/2}(x)}(u)$ must be bounded and thus $u\in L^\infty_{loc}(\Om)$. Moreover, if $x \in \Om\setminus \Theta_u$ we find that 
 $\lim_{\rho \to 0} osc_{B_{\rho}(x)}(u)=0$ so that $u$ is continuous at the point $x$.
\end{proof}

We remark that if sets $(E_n)_n$ satisfy the density estimate of Lemma~\ref{lemdens}
and converge in $L^1$ to some limit set, then one easily deduces that
the convergence also holds in the Hausdorff (or Kuratowski, is the sets
are unbounded) sense. Applying this principle to the level sets of $u$, 
we find that all points in the support
of $Du$ must be on the boundary of a  level set of $u$:
\begin{proposition}\label{proplevelset}
For any $x\in S$, there exists $s\in \R$ such that either $x\in \partial\{u >s\}$
or $x\in \partial \{u\ge s\}$.
\end{proposition}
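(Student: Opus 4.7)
The plan is to combine the coarea formula with the uniform density estimate of Lemma~\ref{lemdens} in a compactness argument. Roughly: at each scale $\rho\to 0$ I find a level $s_\rho$ for which $\{u>s_\rho\}$ has positive perimeter in $B_\rho(x)$, extract a convergent subsequence $s_n\to s$, and transfer the ``boundary point'' property from the approximating level sets to the limit level set.

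First, since $x\in S$, $|Du|(B_{\rho_n}(x))>0$ for any sequence $\rho_n\downarrow 0$. The coarea identity $|Du|(B_{\rho_n}(x))=\int_\R P(\{u>t\},B_{\rho_n}(x))\,dt$ yields $s_n\in\R$ with $P(\{u>s_n\},B_{\rho_n}(x))>0$, and hence a point $x_n\in\partial^*\{u>s_n\}\cap B_{\rho_n}(x)$. Because $u\in L^\infty_{loc}(\Om)$ (by the corollary to Lemma~\ref{lemdens}), the sequence $(s_n)$ is bounded, so up to a subsequence $s_n\to s\in\R$; by a further extraction, one of three cases holds: $s_n\equiv s$; or $s_n$ strictly decreases to $s$; or $s_n$ strictly increases to $s$.

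If $s_n\equiv s$, then $x_n\in\partial\{u>s\}$ for every $n$, and since Lemma~\ref{lemdens} implies that $\{u>s\}^{(0)}$ and $\{u>s\}^{(1)}$ are open, their complement $\partial\{u>s\}$ is closed and contains the limit $x$. Otherwise set $E:=\{u>s\}$ when $s_n\downarrow s$, and $E:=\{u\geq s\}$ when $s_n\uparrow s$; in both situations $E_n:=\{u>s_n\}$ is a monotone family whose limit, by elementary set-theoretic identities, is exactly $E$, so $\chi_{E_n}\to\chi_E$ in $L^1_{loc}$. Applying Lemma~\ref{lemdens} at each $x_n\in\partial E_n$ yields $|E_n\cap B_r(x_n)|\geq\gamma|B_r|$ and $|B_r(x_n)\setminus E_n|\geq\gamma|B_r|$ for every $r$ below a uniform threshold. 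Sending $n\to\infty$ at fixed $r$, $|B_r(x_n)\triangle B_r(x)|\to 0$ combined with $L^1_{loc}$ convergence gives $|E_n\cap B_r(x_n)|\to|E\cap B_r(x)|$, and similarly for the complement; hence $|E\cap B_r(x)|\geq\gamma|B_r|$ and $|B_r(x)\setminus E|\geq\gamma|B_r|$ for all small $r$, so $x\notin E^{(0)}\cup E^{(1)}$, i.e.\ $x\in\partial E$.

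The main conceptual issue is to identify which level set carries the boundary point in the limit: the two regimes $s_n\downarrow s$ and $s_n\uparrow s$ correspond respectively to $\{u>s\}$ and $\{u\geq s\}$, and this dichotomy is genuine, as already visible when $u$ has a jump at the level $s$. Technically, the only delicate ingredient is the uniform lower density $\gamma$ in Lemma~\ref{lemdens}: without it, nothing would prevent the approximating boundary points $x_n$ from concentrating on a point of density $0$ or $1$ in the limit.
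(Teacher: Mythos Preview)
Your proof is correct and follows essentially the same route as the paper. Both arguments pick, via the coarea formula, levels $s_n$ with $\partial\{u>s_n\}\cap B_{\rho_n}(x)\neq\emptyset$, use $u\in L^\infty_{loc}$ to bound the $s_n$, and then exploit the uniform density estimates of Lemma~\ref{lemdens} to carry the boundary-point property to a limit level set. The paper phrases the last step as ``Hausdorff convergence'' of the level sets (relying on the remark just before the proposition that $L^1$ convergence plus uniform density estimates implies Hausdorff convergence), whereas you pass to the limit directly in the two-sided density inequalities; your explicit trichotomy $s_n\equiv s$, $s_n\downarrow s$, $s_n\uparrow s$ makes transparent why the limit set is $\{u>s\}$ in one case and $\{u\ge s\}$ in the other, a point the paper leaves implicit.
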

\begin{proof} First, if $x\not\in S$ then $|Du|(B_\rho(x))=0$
for some $\rho>0$ and clearly $x$ cannot be on the boundary of a level
set of $u$. On the other hand, if $x\in S$, then for any ball $B_{1/n}(x)$
($n$ large) there is a level $s_n$ (uniformly bounded) with
$\partial \{u>s_n\}\cap B_{1/n}(x)\neq \emptyset$ and by Hausdorff convergence,
we deduce that either $x\in\partial\{u>s\}$ or  $x\in\partial \{u\ge s\}$
where $s$ is the limit of the sequence $(s_n)_n$ (which must actually converge).
\end{proof}
\medskip
The following stability property is classical (see e.g. \cite{CGN}).
\begin{proposition}\label{stabilitycalib}
Let $E_n$ be 
local minimizers of~\eqref{geomfun}, with a function $g=g_n\in L^d(\Om)$,
and converging in the $L^1$-topology to a set $E$.
Assume that the sets $E_n$ are calibrated by $z_n$ (see Prop.~\ref{defcalibration}),
that $z_n\stackrel{*}{\rightharpoonup} z$ weakly-$*$ in $L^\infty$
and $g_n\to g=-\Div z\in L^d(\Om)$, in $L^1(\Om)$ as $n\to\infty$.
Then $z$ calibrates $E$, which is
thus also a minimizer of~\eqref{geomfun}.
\end{proposition}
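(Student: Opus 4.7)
To establish that $z$ calibrates $E$, I need to verify three things: (a) $F^*(x,z(x))=0$ a.e.\ in $\Om$ (equivalently $F^\circ(x,z)\le 1$ a.e.); (b) $\Div z=-g\in L^d(\Om)$ (already given); and (c) $[z,D\chi_E]=F(x,D\chi_E)$ as measures in $\Om$. Once (c) is proved, applying Proposition~\ref{defcalibration} immediately yields that $E$ minimises \eqref{geomfun}.

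For (a), I would observe that the constraint $F^\circ(x,z(x))\le 1$ a.e.\ can be rewritten, using Fenchel's inequality and the identity $F^\circ(x,z)=\sup_{F(x,p)\le 1} z\cdot p$, as the family of linear inequalities $\int_\Om z\cdot p\,dx\le \int_\Om F(x,p)\,dx$ for every $p\in C_c^\infty(\Om;\R^d)$. Each $z_n$ satisfies this, and the left-hand side is weak-$*$ continuous in $z_n$, so $z$ inherits the property and therefore $F^\circ(x,z)\le 1$ a.e.

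For (c), the plan is to pass to the limit in the Anzelotti pairing tested against arbitrary $\phi\in C_c^\infty(\Om)$ with $\phi\ge 0$. Using the definition of $[z_n,D\chi_{E_n}]$ and $\Div z_n=-g_n$,
\[
\langle [z_n,D\chi_{E_n}],\phi\rangle \;=\; \int_{E_n}\phi\, g_n\,dx \;-\; \int_{E_n} z_n\cdot\nabla\phi\,dx .
\]
The first term converges to $\int_E\phi g\,dx$ because $g_n\to g$ in $L^1(\Om)$ and $\chi_{E_n}\to \chi_E$ boundedly in $L^\infty$ and strongly in $L^1$; for the second term I would decompose
\[
\int \chi_{E_n} z_n\cdot\nabla\phi - \int\chi_E z\cdot\nabla\phi
= \int (z_n-z)\cdot\chi_E\nabla\phi + \int z_n\cdot(\chi_{E_n}-\chi_E)\nabla\phi,
\]
where the first piece vanishes by weak-$*$ convergence against the fixed $L^1$ function $\chi_E\nabla\phi$, and the second vanishes because $(z_n)$ is bounded in $L^\infty$ and $\chi_{E_n}\to\chi_E$ in $L^1$. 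Hence $\langle [z_n,D\chi_{E_n}],\phi\rangle\to \langle [z,D\chi_E],\phi\rangle$.

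The final step combines this convergence with the calibration identities for the approximating sequence and the Anzelotti-type comparison estimate. Since $[z_n,D\chi_{E_n}]=F(x,D\chi_{E_n})$ for every $n$, the limit above reads
\[
\int_\Om \phi\,dF(x,D\chi_{E_n}) \;\longrightarrow\; \int_\Om \phi\,d[z,D\chi_E].
\]
By the standard $L^1$-lower semicontinuity of $u\mapsto\int\phi\, F(x,Du)$ for $\phi\ge 0$, the left-hand side is bounded below in the limit by $\int_\Om\phi\,dF(x,D\chi_E)$, giving $\int\phi\,dF(x,D\chi_E)\le \int\phi\,d[z,D\chi_E]$. On the other hand, $F^\circ(x,z)\le 1$ a.e.\ together with Anzelotti's bound gives the reverse inequality $[z,D\chi_E]\le F(x,D\chi_E)$ as measures, hence $\int\phi\,d[z,D\chi_E]\le\int\phi\,dF(x,D\chi_E)$. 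Equality for every nonnegative $\phi\in C_c^\infty(\Om)$ forces the two Radon measures to coincide, proving (c). The main delicate point is the passage to the limit of the pairing, where weak-$*$ convergence of $z_n$ must be combined with strong $L^1$ convergence of $\chi_{E_n}$; the splitting above is the natural way to handle the product of a weakly and a strongly converging sequence.
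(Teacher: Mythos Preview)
Your argument is correct and complete. The paper does not actually prove this proposition: it declares the result ``classical'' and refers to \cite{CGN}, adding only the brief remark (corresponding to your step (a)) that the constraint $F^\circ(x,z)\le 1$ passes to weak-$*$ limits ``thanks to the continuity of $F$ and the convexity in the second variable''. Your proof follows exactly the standard route one expects in that reference: pass to the limit in the Anzelotti pairing $\langle[z_n,D\chi_{E_n}],\phi\rangle$ via the strong/weak splitting of the product $\chi_{E_n}z_n$, then sandwich $[z,D\chi_E]$ between $F(x,D\chi_E)$ using lower semicontinuity from below and Anzelotti's bound $[z,D\chi_E]\le F(x,D\chi_E)$ (valid since $F^\circ(x,z)\le 1$) from above. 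The only point worth a word of care is the convergence $\int_{E_n}\phi g_n\to\int_E\phi g$: here one really uses that $\chi_{E_n}\to\chi_E$ in every $L^p(\supp\phi)$, $p<\infty$, by interpolation between the $L^1$ convergence and the uniform $L^\infty$ bound, so that the product with $g\in L^d$ converges; your phrase ``boundedly in $L^\infty$ and strongly in $L^1$'' is presumably shorthand for this.
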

Let us observe that, if $z_n\stackrel{*}{\rightharpoonup}z$
and $F^\circ(x,z_n(x))\le 1$ a.e. in $\Om$, then in the limit one still gets
$F^\circ(x,z(x))\le 1$ a.e. in $\Om$, thanks to the continuity of $F$
and the convexity in the second variable.

%\medskip

\subsection{The Lebesgue points of the calibration.}

The next result shows that the regularity of the calibration $z$
implies some regularity of the calibrated set.

\begin{theorem}\label{zfixPL}
Let $E=\{u>t\}$ or $E=\{u\ge t\}$, and
let $\bar x\in \partial E$ be a Lebesgue point of $z$.
Then, $\bar x  \in \partial^*E$ and
\begin{equation}\label{zeqnu}
z(\bar x)=\nabla_p F(\bar x,\nu^E(\bar x)).
\end{equation}
\end{theorem}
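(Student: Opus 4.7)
The plan is to blow up at $\bar x$ and exploit the $L^d$-scaling of $g=\Div z$, which kills the bulk term in the limit and leaves a blow-up set $E_0$ calibrated by the \emph{constant} vector $z(\bar x)$; strict convexity of $F(\bar x,\cdot)$ then forces $E_0$ to be a half-space whose normal is dual to $z(\bar x)$.

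For $\rho>0$ small, set $E_\rho := \rho^{-1}(E-\bar x)$, $z_\rho(y) := z(\bar x+\rho y)$ and $g_\rho(y) := \rho\,g(\bar x+\rho y)$, so that $\Div z_\rho = g_\rho$ and $\|g_\rho\|_{L^d(B_R)} = \|g\|_{L^d(B_{\rho R}(\bar x))}\to 0$. Since $\bar x$ is a Lebesgue point of $z$ we have $z_\rho\to z(\bar x)$ in $L^1_{loc}(\R^d)$; being uniformly bounded, the convergence also holds weakly-$*$ in $L^\infty$. A direct change of variables shows that $E_\rho$ is a local minimizer of
\[
E'\mapsto \int_{\partial^* E'} F(\bar x+\rho y,\nu)\,d\HH - \int_{E'}g_\rho\,dy.
\]
As $g_\rho\to 0$ in $L^d_{loc}$, the density estimates of Lemma~\ref{lemdens} are uniform in $\rho$; together with $\bar x\in \partial E$ this yields, up to a subsequence, $E_\rho\to E_0$ in $L^1_{loc}$ and in the Hausdorff sense, with $0\in\partial E_0$. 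Continuity of $F$ in $x$ on compact sets then gives that $E_0$ minimizes the frozen functional $E'\mapsto \int_{\partial^* E'}F(\bar x,\nu)\,d\HH$ in $\R^d$.

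Applying Proposition~\ref{stabilitycalib}, suitably adapted to the varying anisotropy $F(\bar x+\rho y,\cdot)\to F(\bar x,\cdot)$, one sees that the constant field $z(\bar x)$ calibrates $E_0$: $[z(\bar x),\nu^{E_0}] = F(\bar x,\nu^{E_0})$ $\HH$-a.e.\ on $\partial^* E_0$. For a constant field the generalized normal trace reduces to the pointwise scalar product, so $z(\bar x)\cdot \nu^{E_0}(y) = F(\bar x,\nu^{E_0}(y))$ $\HH$-a.e.\ on $\partial^* E_0$. The constraint $F^\circ(\bar x,z(\bar x))\le 1$ survives in the limit, by continuity of $F$ in $x$ and convexity in $p$, as noted right after Proposition~\ref{stabilitycalib}; hence by definition of the polar $z(\bar x)\cdot \nu\le F(\bar x,\nu)$ for every unit $\nu$. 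Using the equality characterization $p\cdot z = F(x,p)\Leftrightarrow z = \nabla_p F(x,p)$ recorded in the Preliminaries, together with strict convexity of $F(\bar x,\cdot)^2/2$ from~\eqref{strictconvF} (which implies injectivity of $\nu\mapsto \nabla_p F(\bar x,\nu)$ on the Euclidean unit sphere via Euler's identity), the equality case fixes $\nu$ uniquely; call this unit vector $\nu_0$. Hence $\nu^{E_0}\equiv\nu_0$ $\HH$-a.e.\ on $\partial^* E_0$, and a standard BV argument (the distributional derivative of $\chi_{E_0}$ is aligned with the fixed direction $\nu_0$) forces $E_0$ to coincide, up to a negligible set, with the half-space $\{y\cdot\nu_0>0\}$.

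Because $\nu_0$ depends only on $z(\bar x)$, the blow-up limit is the same along every subsequence, so the whole family $E_\rho$ converges to this half-space. By De Giorgi's characterization of the reduced boundary, this means $\bar x\in\partial^* E$ with $\nu^E(\bar x)=\nu_0$, and the defining identity $z(\bar x)=\nabla_p F(\bar x,\nu_0)$ yields~\eqref{zeqnu}. The main obstacle lies in the third paragraph: Proposition~\ref{stabilitycalib} is stated with a fixed anisotropy, so one must upgrade both the minimality of $E_0$ under the frozen perimeter and the limit calibration identity to handle $F(\bar x+\rho y,\cdot)$; this is routine given the uniform continuity of $F$ in $x$ on compact sets, but requires care to track the pointwise constraint $F^\circ(\bar x+\rho y, z_\rho(y))\le 1$ through the weak-$*$ passage.
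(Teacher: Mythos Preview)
Your blow-up strategy is exactly the one the paper uses: rescale, exploit the $L^d$-scaling of $g$ to kill the bulk term, pass to the limit in the calibration to get a constant field calibrating $E_0$, and use strict convexity of $F(\bar x,\cdot)$ to force $E_0$ to be a half-space with normal $\nu_0$ dual to $z(\bar x)$. The uniqueness of the limit along subsequences is also handled the same way.

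The one step you pass over too quickly is the last one. From ``$E_\rho\to\{y\cdot\nu_0>0\}$ in $L^1_{loc}$'' you jump to ``$\bar x\in\partial^*E$'' by invoking ``De Giorgi's characterization of the reduced boundary''. But De Giorgi's theorem goes the other way: membership in $\partial^*E$ implies blow-up to a half-space. The converse requires in addition that the perimeter converges, i.e.\ $|D\chi_{E_\rho}|(B_R)\to|D\chi_{\bar E}|(B_R)$; $L^1$ convergence alone only gives weak-$*$ convergence of $D\chi_{E_\rho}$ and \emph{lower} semicontinuity of the total variation, which leaves the ratio $D\chi_{E_\rho}(B_R)/|D\chi_{E_\rho}|(B_R)$ undetermined. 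The paper addresses this explicitly: first, minimality of $E_\rho$ and of $\bar E$, together with Hausdorff convergence of $\partial E_\rho$, yields convergence of the anisotropic energies $\int_{\partial E_\rho\cap B_R}F(\bar x,\nu^{E_\rho})\,d\HH$; then Reshetnyak's continuity theorem (with $F(\bar x,\cdot)$ as reference norm) upgrades this to convergence of the Euclidean perimeters, from which the definition of $\partial^*E$ follows directly. This is the genuine technical content of the final step; the varying-anisotropy issue you flag as the ``main obstacle'' is by comparison routine.
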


\begin{proof}
We reason as in \cite[Th. 4.5]{CGN} and let $z_\rho(y) := z(\bar x+\rho y)$.
Since $\bar x$ is a Lebesgue point of $z$, we have
that $z_\rho\to\bar z$ in $L^1(B_R)$, hence
also weakly-$*$ in $L^\infty(B_R)$ for any $R>0$,
where $\bar z\in \R^d$ is a constant vector.

We let $E_\rho := (E-\bar x)/\rho$ and  $g_\rho(y):=g(\bar x + \rho y)$ (so that $\Div z_\rho=\rho g_\rho$). 
Observe that $E_\rho$ minimizes 
\[
\int_{\partial^* E_\rho\cap B_R} F(\bar x+\rho y,\nu^{E_\rho}(y))\,d\HH(y)
+ \rho\int_{E_\rho\cap B_R} g_\rho(y)\,dy\,,
\]
% \[
% \int_{\partial E_\rho\cap B_R} F(\bar x+\rho y,\nu^{E_\rho}(y))\,d\HH(y)
% \]
with respect to compactly supported
perturbations of the set (in the fixed ball $B_R$).
Also,
\[
\|\rho g_\rho\|_{L^d(B_R)}\ = \ \|g\|_{L^d(B_{\rho R})}
\ \stackrel{\rho\to 0}{\longrightarrow}\ 0\,.
\]
By Lemma~\ref{lemdens}, the sets $E_\rho$ (and the boundaries $\partial E_\rho$)
satisfy uniform density bounds, and hence are compact with respect
to both local $L^1$ and Hausdorff convergence.

Hence, up to extracting a subsequence, we can assume that $E_\rho\to \bar E$, with $0\in \partial \bar E$.
Proposition~\ref{stabilitycalib} shows that $\bar z$ is a calibration
for the energy $\int_{\partial \bar E\cap B_R} F(\bar x, \nu^{\bar E}(y))\,d\HH(y)$,
and that $\bar E$ is a minimizer calibrated by $\bar z$.

It follows that $[\bar z,\nu^{\bar E}]=F(\bar x,\nu^{\bar E}(y))$ for
$\HH$-a.e.~$y$ in $\partial \bar E$, but since $\bar z$ is a constant,
we deduce that $\bar E=\{ y\cdot \bar\nu\ge 0\}$ with
$\bar \nu/F(\bar x,\bar \nu)=\nabla_p F^\circ(\bar x,\bar z)$\footnote{
We use here that $F(\bar x,\cdot)\nabla F(\bar x,\cdot)=
[F^\circ(\bar x,\cdot)\nabla F^\circ(\bar x,\cdot)]^{-1}$, so that
$\bar z=\nabla F(\bar x,\nu^{\bar E}(y))$ implies both
$F^\circ(\bar x,\bar z)=1$ and
$\nu^{\bar E}(y)/F(\bar x,\nu^{\bar E})(y)=\nabla F^\circ(\bar x,\bar z)$}.
In particular the limit $\bar E$
is unique, hence we obtain the global convergence of $E_\rho\to \bar E$, without passing to a subsequence.

We want to deduce that $\bar x\in\partial^* E$, with
$\nu^E(\bar x)=F(\bar x,\nu^{E}(\bar x))\nabla_p F^\circ(\bar x,\bar z)$, which is equivalent to \eqref{zeqnu}. 
The last identity is obvious from
the arguments above, so that we only need to show that
\begin{equation}\label{reducbound}
\lim_{\rho\to 0} \frac{ D\chi_{E_\rho}(B_1) }{ |D\chi_{E_\rho}|(B_1) }\ =\ \bar \nu\,.
\end{equation}
Assume we can show that
\begin{equation}\label{convenergy}
\lim_{\rho\to 0} |D\chi_{E_\rho}|(B_R)\ =\ |D\chi_{\bar E}|(B_R)
\ \left(\ =\ \omega_{d-1}R^{d-1}\right)
\end{equation}
for any $R>0$, then for any $\psi\in C_c^\infty(B_R;\R^d)$ we would get
\begin{multline*}
\frac{1}{|D\chi_{E_\rho}|(B_R)}\int_{B_R} \psi\cdot D\chi_{E_\rho}
\ =\ -\frac{1}{|D\chi_{E_\rho}|(B_R)}\int_{B_R\cap E_\rho} \Div \psi(x)\,dx
\\ \longrightarrow\ -\frac{1}{|D\chi_{\bar E}|(B_R)}\int_{B_R\cap \bar E} \Div \psi(x)\,dx
\ =\ \frac{1}{|D\chi_{\bar E}|(B_R)}\int_{B_R} \psi\cdot D\chi_{\bar E}
\end{multline*}
and deduce that the  
measure $D\chi_{E_\rho}/(|D\chi_{E_\rho}|(B_R))$  weakly-$*$ converges to 
$D\chi_{\bar E}/(|D\chi_{\bar E}|(B_R))$.  
Using again \eqref{convenergy}), we then obtain that 
\begin{equation}\label{aeR}
\lim_{\rho\to 0} \frac{ D\chi_{E_\rho}(B_R) }{ |D\chi_{E_\rho}|(B_R) }\ =\ \bar \nu
\end{equation}
for almost every $R>0$.
Since $D\chi_{E_\rho}(B_{\mu R})/(|D\chi_{E_\rho}|(B_{\mu R}))=D\chi_{E_{\rho/\mu}}(B_{R})/(|D\chi_{E_{\rho/\mu}}|(B_{R}))$ for any $\mu>0$, 
\eqref{aeR} holds in fact for any $R>0$ and \eqref{reducbound} follows, so that
$\bar x\in \partial^* E$.

It remains to show~\eqref{convenergy}. First, we observe that,
by minimality of $E_\rho$ and $\bar E$ plus the Hausdorff convergence of $\partial E_\rho$ in balls,
we can easily show the convergence of the energies
% \begin{equation}
% \lim_{\rho\to 0} \int_{\partial E_\rho\cap B_R}  F(\bar x+\rho y,\nu^{E_\rho}(y))\,d\HH(y)
% \ =\ \int_{\partial \bar E\cap B_R}
% F(\bar x,\nu^{\bar E}(y))\,d\HH(y)
% \end{equation}
\begin{multline*}
\lim_{\rho\to 0} \int_{\partial E_\rho\cap B_R}  F(\bar x+\rho y,\nu^{E_\rho}(y))\,d\HH(y)
+ \rho\int_{E_\rho\cap B_R} g_\rho(y)\,dy
\\ =\ \int_{\partial \bar E\cap B_R}
F(\bar x,\nu^{\bar E}(y))\,d\HH(y)
\end{multline*}
and, by the continuity of $F$,
\begin{equation}\label{convenergyrho}
\lim_{\rho\to 0} \int_{\partial E_\rho\cap B_R}  F(\bar x,\nu^{E_\rho}(y))\,d\HH(y)
%+ \rho\int_{E_\rho\cap B_R} g_\rho(y)\,dy
\ =\ \int_{\partial \bar E\cap B_R}
F(\bar x,\nu^{\bar E}(y))\,d\HH(y)\,.
\end{equation}
Then, \eqref{reducbound} follows from Reshetnyak's continuity
theorem where, instead of using the Euclidean norm as reference norm, we use the uniformly convex function $F(\bar x, \cdot)$ and the convergence of the measures $F(\bar x, D\chi_{E_\rho})$ to $F(\bar x,D \chi_{\bar E})$ (see  \cite{Resh,CGN}). 

\end{proof}

%\subsection{A converse statement in low dimension $d\le 3$}
\begin{corollary}
For any $x\in S$ let $E^x \in \left\{\{u>u(x)\},\{u\ge u(x)\}\right\}$
be the upper level set of $u$ such
that $x\in\partial E^x$. Then, the equality
\begin{equation}\label{euleru}
 z(x)=\nabla_p F\left(x,\frac{D\chi_{E^x}}{|D\chi_{E^x}|}(x)\right)
\end{equation}
holds Lebesgue a.e. in $S=\supp(Du)$.
\end{corollary}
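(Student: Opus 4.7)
The plan is to combine Proposition~\ref{proplevelset}, which locates every point of $S$ on the boundary of some level set of $u$, with Theorem~\ref{zfixPL}, which pins down $z$ at a Lebesgue point lying on such a boundary. Since $z\in L^\infty(\Om;\R^d)$ and $u\in L^\infty_{loc}(\Om)$, Lebesgue-a.e.~$x\in S$ is simultaneously a Lebesgue point of $z$ and of $u$, and I will work only with such $x$, letting $u(x)$ denote the Lebesgue value of $u$ at $x$.

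Fix such an $x$. Proposition~\ref{proplevelset} provides some $s\in\R$ with $x\in\partial\{u>s\}$ or $x\in\partial\{u\ge s\}$. The crucial intermediate step is to show $s=u(x)$, which is exactly what is needed to make $E^x$ well defined. If $s>u(x)$, setting $\eps=(s-u(x))/2$ the Lebesgue-point property yields
\[
\frac{|\{u>s\}\cap B_\rho(x)|}{|B_\rho(x)|}\ \le\ \frac{|\{|u-u(x)|>\eps\}\cap B_\rho(x)|}{|B_\rho(x)|}\ \longrightarrow\ 0,
\]
so Lemma~\ref{lemdens} forces $\{u>s\}\cap B_{\rho/2}(x)=\emptyset$ for small $\rho$, contradicting $x\in\partial\{u>s\}$; the same estimate rules out $x\in\partial\{u\ge s\}$. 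Symmetrically, if $s<u(x)$ the complement $\{u<s\}$ has vanishing density at $x$ and Lemma~\ref{lemdens} places a whole neighbourhood of $x$ inside $\{u\ge s\}$, again contradicting both alternatives. Hence $s=u(x)$.

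Having fixed $E^x\in\{\{u>u(x)\},\{u\ge u(x)\}\}$ with $x\in\partial E^x$, I apply Theorem~\ref{zfixPL} at the Lebesgue point $x$ of $z$ to conclude both $x\in\partial^*E^x$ and $z(x)=\nabla_p F(x,\nu^{E^x}(x))$. On $\partial^*E^x$ the measure-theoretic normal coincides by definition with the Radon--Nikodym derivative $D\chi_{E^x}/|D\chi_{E^x}|$, and this delivers~\eqref{euleru}. The main obstacle is really the identification $s=u(x)$: one has to observe that the density estimate of Lemma~\ref{lemdens} is sharp enough to exclude every level other than the Lebesgue value of $u$ from being realised at $x$. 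Once this is secured, the rest is a direct reading of the previous theorem.
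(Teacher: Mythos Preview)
Your argument is correct and is precisely the intended one: the paper states this corollary without proof, as an immediate consequence of Theorem~\ref{zfixPL} applied at the Lebesgue points of $z$ in $S$, together with Proposition~\ref{proplevelset}. Your additional verification that the level $s$ produced by Proposition~\ref{proplevelset} must equal the Lebesgue value $u(x)$, via the density estimate of Lemma~\ref{lemdens}, makes explicit a point the paper leaves to the reader.
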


\begin{remark}\rm 
In the inhomogeneous isotropic case $F(x,p)=a(x)|p|$, with $a(\cdot)$ periodic, 
a similar result has been proved by 
Auer and Bangert in \cite[Th. 4.2]{AB}. As a consequence they obtain
differentiability properties of the so-called stable norm associated to 
the functional $J$ (see also \cite{CGN} for the anisotropic 
version of their result).   
\end{remark}

In dimension $2$ and $3$ we can also show the reverse implication, proving that regular points of the boundary correspond to Lebesgue points of the calibration. The idea is to show that the parameters $r,\,\rho$ in Proposition \ref{convcylinder} can be taken of the same order. 

\begin{theorem} \label{propLebesgue}
Assume the dimension is $d=2$ or $d=3$.
Let $x,s$ be as  in Proposition~\ref{proplevelset}, $E$ be a minimizer of \eqref{geomfun} and assume
$x\in \partial^*E$. Then $x$ is a Lebesgue point of $z$ 
and \eqref{zeqnu} holds at $x$.
\end{theorem}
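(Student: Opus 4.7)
The natural strategy is a blow-up argument coupled with the strict convexity of $(F^{\circ}(x,\cdot))^{2}$, giving a converse to Theorem~\ref{zfixPL}. Setting $z_{\rho}(y):=z(x+\rho y)$, $E_{\rho}:=(E-x)/\rho$ and $g_{\rho}(y):=g(x+\rho y)$, the density estimates of Lemma~\ref{lemdens} provide precompactness of $(E_{\rho})$ in $L^{1}_{\mathrm{loc}}$ and Hausdorff sense; since by hypothesis $x\in\partial^{*}E$, every subsequential limit is the tangent half-space $H:=\{y\cdot \nu\ge 0\}$ with $\nu:=\nu^{E}(x)$, and so $E_{\rho}\to H$ along the full sequence. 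The fields $z_{\rho}$ are uniformly bounded in $L^{\infty}$ and $\Div z_{\rho}=\rho g_{\rho}$ satisfies $\|\rho g_{\rho}\|_{L^{d}(B_{R})}=\|g\|_{L^{d}(B_{\rho R}(x))}\to 0$. By Proposition~\ref{stabilitycalib}, any weak-$*$ cluster point $z_{\infty}$ of $(z_{\rho})$ is a calibration of $H$ with respect to the frozen anisotropy $F(x,\cdot)$: $F^{\circ}(x,z_{\infty})\le 1$ a.e.\ and $[z_{\infty},\nu]=F(x,\nu)$ on $\partial H$.

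Let $\bar z:=\nabla_{p}F(x,\nu)$, so $F^{\circ}(x,\bar z)=1$, $\nabla_{z}F^{\circ}(x,\bar z)=\nu$ and $\bar z\cdot\nu=F(x,\nu)$. Applying the $\delta^{2}$-convexity inequality for $(F^{\circ}(x,\cdot))^{2}$ at $\bar z$ and exploiting continuity of $F^{\circ}$ in its first variable gives, for a.e.\ $y$ near $x$,
\[
\delta^{2}|z(y)-\bar z|^{2}\;\le\;F^{\circ}(x,z(y))^{2}-1-2\nu\cdot(z(y)-\bar z)\;\le\;C|y-x|-2\nu\cdot(z(y)-\bar z).
\]
Integrating on $B_{\rho}(x)$ and dividing by $|B_{\rho}|$ yields the key estimate
\[
\delta^{2}\fint_{B_{\rho}(x)}|z-\bar z|^{2}\,dy\;\le\;C\rho\,+\,2\Bigl(F(x,\nu)-\nu\cdot\fint_{B_{\rho}(x)}z\,dy\Bigr).
\]
Consequently, it suffices to establish the single-scale convergence
\begin{equation}\label{avgstar}
\nu\cdot\fint_{B_{\rho}(x)}z\,dy\;\longrightarrow\;F(x,\nu)\qquad\text{as }\rho\to 0,
\end{equation}
for once this is shown, the right-hand side tends to zero, hence $\fint_{B_{\rho}}|z-\bar z|^{2}\to 0$, and by Jensen also $\fint_{B_{\rho}}|z-\bar z|\to 0$. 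This identifies $x$ as a Lebesgue point of $z$ with value $\bar z=\nabla_{p}F(x,\nu)$, which is exactly \eqref{zeqnu}.

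The crux is therefore \eqref{avgstar}: Proposition~\ref{convcylinder} only provides the double limit $\lim_{\rho\to 0}\lim_{r\to 0}\fint_{C_{r,\rho}(x,\nu)}z\cdot\nu=F(x,\nu)$, and we must upgrade it to a single limit where $r\sim\rho$, after which comparison of $C_{\rho,\rho}$ with $B_{\rho}$ is routine. Applying Green's formula on $E\cap B_{\rho}$ and on $E^{c}\cap B_{\rho}$ with the calibration identity $[z,\nu^{E}]=F(\cdot,\nu^{E})$ on $\partial^{*}E$ yields, after subtracting,
\[
\int_{\partial B_{\rho}}(\chi_{E^{(1)}}-\chi_{(E^{c})^{(1)}})\,[z,n_{\mathrm{out}}]\,d\HH\;=\;2\int_{\partial^{*}E\cap B_{\rho}}F(\cdot,\nu^{E})\,d\HH-\int_{B_{\rho}}(\chi_{E}-\chi_{E^{c}})\,g\,dy,
\]
and the right-hand side equals $2\omega_{d-1}F(x,\nu)\rho^{d-1}+o(\rho^{d-1})$ by the blow-up at $x\in\partial^{*}E$ plus $\|g\|_{L^{d}(B_{\rho})}\to 0$. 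The hypothesis $d\le 3$ is used to convert this surface statement on $\partial B_{\rho}$, together with the Hausdorff control $\partial E\cap B_{\rho}\subset\{|(y-x)\cdot\nu|\le o(\rho)\}$, into the volume average \eqref{avgstar}: one integrates the identity radially and controls the tangential contribution using that the transverse cross-section of $C_{r,\rho}$ with $r\sim\rho$ has a comparable measure to the base, which in low dimensions is not absorbed by the dimensional scaling of the Anzellotti trace.

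\textbf{Main obstacle.} The hard step is precisely this passage from the double-limit formula of Anzellotti to \eqref{avgstar}: a priori, the weak-$*$ limit $z_{\infty}$ is merely a calibration of $H$ and need not be constant (calibrations of a half-space are never uniquely determined in the bulk), so the single-scale convergence cannot come from uniqueness of $z_{\infty}$. Instead, the dimensional restriction $d\le 3$ enters by making the boundary contributions on $\partial B_{\rho}$ quantitatively controllable in terms of the calibration trace on the $(d-1)$-dimensional reduced boundary $\partial^{*}E\cap B_{\rho}$, something that fails in higher dimensions because the codimension of $\partial^{*}E$ in $B_{\rho}$ becomes too small relative to the dimensional scaling of the Anzellotti cylinder formula.
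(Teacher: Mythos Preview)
Your reduction is sound and matches the paper: by the uniform convexity of $(F^\circ(x,\cdot))^2$ one has $\delta^2|z(y)-\bar z|^2\le C(F(x,\nu)-\nu\cdot z(y))+o(1)$, so the whole theorem does reduce to the single-scale statement \eqref{avgstar}. You also correctly identify why weak-$*$ compactness of $z_\rho$ is useless (calibrations of a half-space are badly non-unique) and that one must upgrade Anzellotti's double limit to $r\sim\rho$.

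The gap is in the mechanism you propose for that upgrade. Applying Gauss--Green on $E\cap B_\rho$ and $E^c\cap B_\rho$ and integrating in $\rho$ produces $\int_{B_\rho}(\chi_E-\chi_{E^c})\,z\cdot\tfrac{y-x}{|y-x|}\,dy$, not $\int_{B_\rho} z\cdot\nu\,dy$; the radial direction is not $\nu$ away from a thin strip, and you have no a~priori control on the tangential components of $z$ that would let you exchange them. Your heuristic for why $d\le 3$ helps (``codimension versus dimensional scaling of the Anzellotti cylinder'') does not correspond to an actual estimate.

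What is missing is the key pointwise inequality $(\xi\cdot z)^2\le C\,(F(x,\nu)-\nu\cdot z)$ for every $\xi\perp\bar z$, which follows from the strict convexity of $(F^\circ)^2$ and says that the tangential part of $z$ is controlled by the square root of the normal deficit. The paper exploits this on \emph{cylinders} $D_s^t$ aligned with $\bar z$: integrating $\Div(\bar z-z_\rho)$ against $(2\chi_{E_\rho}-1)t-\bar\nu\cdot y/F(\bar\nu)$ makes the volume term $f(s):=\int_{D_s^t}(1-z_\rho\cdot\bar\nu/F(\bar\nu))\,dy$ appear directly, while the lateral boundary term is bounded via the pointwise inequality by $C t^{3/2}\sqrt{s^{d-2}f'(s)}$. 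This yields the differential inequality
\[
f(s)^2\ \le\ c\bigl(s^{d-2}t^3 f'(s)+t^{4-2/d}s^{2d-4+2/d}\omega(\rho s)^2\bigr),
\]
and the dimension restriction enters \emph{here}: integrating $f'/f^2\ge (ct^3)^{-1}s^{2-d}$ from $t$ to $a$ gives a lower bound for $1/f(t)$ proportional to $\int_t^a s^{2-d}\,ds$, which diverges as $t\to 0$ only when $d\le 3$ (linearly for $d=2$, logarithmically for $d=3$, and not at all for $d\ge 4$). That divergence forces $f(t)/t^d\to 0$, i.e.\ \eqref{avgstar}. Your plan has none of these three ingredients (the tangential control, the cylinder/test-function construction producing $f$, the ODE in $s$), and without them the passage from the surface identity to the volume average cannot be closed.
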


\begin{proof}
We divide the proof into two steps.

{\it Step 1.} We first consider anisotropies $F$ which are not depending on the $x$ variable.  Without loss of generality we assume $x=0$.
By assumption, there exists the limit
\begin{equation}
\onu \ :=\ \lim_{\rho\to 0} \frac{ D\car{E}(B_\rho(0))}{|D\car{E}|(B_\rho(0))|}
\end{equation}
and, without loss of generality, we assume that it coincides with
the vector $e_d$ corresponding to the last coordinate of $y\in \R^d$. 

Also, if we let $E_\rho:= E/\rho$, the sets $E_\rho$, $E_\rho^c$,
$\partial E_\rho$
converge in $B_1(0)$, in the Hausdorff sense (thanks to the uniform
density estimates), respectively to $\{y_d\ge 0\}$, $\{y_d=0\}$, $\{y_d\le 0\}$.
We also let $z_\rho(y) := z(\rho y)$ and $g_\rho(y) := g(\rho y)$,
in particular $-\Div z_\rho=\rho g_\rho$. We let
 \begin{equation}\label{defomega}
 \omega(\rho)\ =\  \sup_{x\in\Om}\|g\|_{L^d(B_\rho(x)\cap\Om)}\,
 \end{equation}
which is continuously increasing and goes to $0$ as $\rho\to 0$,
since $|g|^d$ is equi-integrable.
% \begin{equation}
% \omega(\rho)\ =\ \frac{1}{\rho^{d-1}} \int_{\partial E\cap B_\rho(0)}
% |\nu_E(x)-\onu|^2 \,d\H^{d-1}\,+\, \|g\|_{L^d(B_\rho(0))}\,.
% \end{equation}
% It is classical that $\lim_{\rho\to 0}\omega(\rho)=0$, indeed,
% $|g|^d\in L^1(\Om)$ and
% \[
% \frac{1}{\rho^{d-1}} \int_{\partial E\cap B_\rho(0)} \hspace{-5mm}
% |\nu_E(x)-\onu|^2 \,d\H^{d-1}
% = 2\frac{|D\car{E}|(B_\rho(0))}{\rho^{d-1}}
% \left(1-\onu\cdot \frac{D\car{E}(B_\rho(0))}{|D\car{E}|(B_\rho(0))|}\right)
% \]
% goes to zero as $\rho\to 0$.

We introduce the following notation: a point in $\R^d$ is denoted by
$y=(y',y_d)$, with $y'\in \R^{d-1}$. We let $D_s:=\{|y'|\le s\}$, $\bar z:= \nabla F(\onu)$ and 
$D_s^t = \{ D_s+ \lambda \bar z: |\lambda|\le t\}$ and denote with $\partial D_s$ the relative boundary
of $D_s$ in $\{y_d=0\}$.

We choose $s\le 1$, $0< t\le s$, ($t$ is chosen small enough
so that $D_s^t\subset B_1(0)$, that is $t < (1/|\bar z|)\sqrt{1-s^2}$).
We integrate in $D_s^t$ the divergence $\rho g_\rho=-\Div z_\rho=\Div(\bar z-z_\rho)$
against the function $(2\car{E}-1)t-\frac{ \onu\cdot y}{F(\onu)} $, which vanishes
for $y_d=\pm t F(\onu) $ if $\rho$ is small enough (given $t>0$), so 
that $\partial E_\rho\cap B_1(0)\subset \{|y_d|\le t F(\onu)\}$. For $y$ on the lateral boundary of the cylinder $D_s^t$, let $\xi(y)$ be the internal normal to $\partial D_s+(-t,t)\bar z$ at the point $y$. 
Using the fact that $z_\rho$ is a calibration for $E_\rho$,
we easily get that for almost all $s$,
\begin{multline}\label{bigone}
\int_{D_s^t} \rho g_\rho\left((2\car{E}-1)t-\frac{ \onu\cdot y}{F(\onu)}\right)\,dy
\\ =\ 
\int_{\partial D_s+(-t,t)\bar z} \left((2\car{E}-1)t-\frac{ \onu\cdot y}{F(\onu)}\right)
[(\bar z-z_\rho),\xi]\,d\H^{d-1}
%\,-\, \int_{D_s^t}(onu-z_\rho)\cdot (2tD\car{E}-\onu)
\\-\,2t\int_{\partial E_\rho\cap D_s^t} 
\left(\bar z\cdot \nu^{E_\rho}-F(\nu^{E_\rho})\right)\,d\H^{d-1}
\,+\, \int_{D_s^t} \left(1-\frac{z_\rho\cdot\onu}{F(\onu)}\right)\,dy\,.
\end{multline}
Now since $F^\circ(\nabla F(\onu))=1$, there holds $\bar z\cdot \nu^{E_\rho}-F(\nu^{E_\rho})\le 0$ and using that $\bar z\cdot \xi(y)=0$ on $\partial D_s+(-t,t)\bar z$, we get 
\begin{multline}\label{bigtwo}
\int_{D_s^t} \left(1-\frac{z_\rho\cdot\onu}{F(\bar \nu)}\right)\,dy\le\ 
\int_{D_s^t} \rho g_\rho\left((2\car{E}-1)t-\frac{ \onu\cdot y}{F(\onu)}\right)\,dy
\\\int_{\partial D_s+(-t,t)\bar z} \left((2\car{E}-1)t-\frac{ \onu\cdot y}{F(\onu)}\right)
z_\rho\cdot \xi\,d\H^{d-1}\,.
\end{multline}

%We observe that $z_\rho\cdot (y'/|y'|)\le \sqrt{1-(z_\rho\cdot\onu)^2}$,
We claim that for $|\xi|\le 1$ with $\xi\cdot \bar z=0$, there holds
\begin{equation}\label{star}
 (\xi\cdot z_\rho)^2\le C(F(\onu)-\onu\cdot z_\rho)
\end{equation}
 Since 
\[(\xi\cdot z_\rho)^2\le |z_\rho|^2-[z_\rho\cdot (\bar z/|\bar z|)]^2\]
it is enough to prove
\[|z_\rho|^2-[z_\rho\cdot (\bar z/|\bar z|)]^2\le  C(F(\onu)-\onu\cdot z_\rho).\]

Using that $\onu/F(\onu)=\nabla F^\circ (\bar z)$, from \eqref{strictconvF} applied to $F^{\circ}$ together with $F^\circ(\bar z)=1\ge F^{\circ}(z_\rho)$, we find
\[(F(\onu)-\onu\cdot z_\rho)=F(\onu) (1-z_\rho\cdot \nabla F^\circ(\bar z))\ge C|z_\rho-\bar z|^2.\]
% Since $|z_\rho-\bar z|^2=|z_\rho|^2+|\bar z|^2 -2 z_\rho\cdot \bar z$, it is then enough proving that $-2 \bar z\cdot z_\rho +|\bar z|^2 \ge -[z_\rho\cdot (\bar z)/|\bar z|)]^2 $ which follows from
% \[0\le (|\bar z|-z_\rho\cdot(\bar z /|\bar z|))^2=
% |\bar z|^2 -2 z_\rho\cdot \bar z + [z_\rho\cdot(\bar z/|\bar z|)]^2,\]
which readily implies  \eqref{star}. We thus have
\begin{multline}\label{big4}
\int_{\partial D_s+(-t,t)\bar z}
\left((2\car{E_\rho} -1) t -\frac{\onu\cdot y}{F(\onu)} \right)
\,\left(z_\rho\cdot \xi\right)\,d\H^{d-1}
\\ \le\,
2 C\sqrt{F(\onu)} t \int_{\partial D_s+(-t,t)\bar z}
\sqrt{1-\frac{z_\rho\cdot\onu}{F(\onu)}}\;d\H^{d-1}
\\ \le\,
2CF(\onu) t\sqrt{t} \left(
\int_{\partial D_s+(-t,t)\bar z} %\hspace{-5mm}
 \left(1-\frac{z_\rho\cdot\onu}{F(\onu)}\right) \,d\H^{d-1}
\right)^\half \sqrt{\H^{d-2}(\partial D_s)}\,.
\end{multline}
Now, we also have
\begin{multline}\label{big5}
\rho\int_{D_s^t}\left((2\car{E_\rho} -1) t -\frac{\onu\cdot y}{F(\onu)}\right)
g_\rho\,dy\,\le\,2t\rho^{1-d}\int_{D_{\rho s}^{\rho t}}g \,dy
\\
\le\, 2t \rho^{1-d}\|g\|_{L^d(B_{\rho s}(0))}  |D_{\rho s}^{\rho t}|^{1-1/d}
\,\le\, c t^{2-1/d} s^{d-2+1/d} \omega(\rho s)
\end{multline}
where here, $c=2\H^{d-1}(D_1)^{1-1/d}$,
and $\omega$ is defined in~\eqref{defomega}.
%On the other hand,
%- t\int_{\partial E_\rho\cap D_s^t}|\onu-\nu_{E_\rho}(x)|^2\,d\H^{d-1}\,.

We choose $a<1$, close to $1$, and  $t\in (0,(1/|\bar z|)\sqrt{1-a^2})$.
If $\rho>0$ is small enough
(so that $\partial E_\rho\cap B_1$ is in $\{ |y_d|\le t F(\onu)\}$),
letting $f(s):=\int_{D^t_s} \left(1-\frac{z_\rho\cdot\onu}{F(\onu)}\right)dy$,
we deduce from~\eqref{bigtwo}, \eqref{big4} and~\eqref{big5}
that for a.e. $s$ with $t\le s\le a$, one has (possibly increasing
the constant $c$)
\begin{equation}\label{finalestimate}
f(s)^2\ \le\  c\left( s^{d-2} t^3  f'(s)
\,+\,t^{4-2/d} s^{2d-4+2/d} \omega(\rho s)^2\right)\,.
\end{equation}
Unfortunately, this estimate does not seem to  give much information for $d>3$.
It seems it allows to conclude only whenever $d\in \{2,3\}$.
Since the case $d=2$
is simpler, we focus on $d=3$. Estimate~\eqref{finalestimate} becomes
\begin{equation}\label{finalestimate3D}
f(s)^2\ \le\  c\left( s t^3  f'(s)
\,+\,  t^{10/3} s^{8/3} \omega(\rho s)^2\right)\,.
\end{equation}
Given $M>0$, we fix a value $t>0$ such that $\log(a/t)\ge c M$.
If $\rho$ is chosen small enough, then $\partial E_\rho\cap B_1(0)
\subset \{ |y_d| < t F(\onu)\}$, and~\eqref{finalestimate3D} holds.
It yields (assuming $f(t)>0$, but if not, then the proposition
is proved)
\begin{equation}\label{ff}
-\frac{f'(s)}{f(s)^2}\,+\,\frac{1}{c t^3} \frac{1}{s} \ \le\ 
ct^{1/3} s^{5/3} \frac{\omega(\rho s)^2}{f(s)^2}
\ \le\ 
ct^{1/3} s^{5/3} \frac{\omega(a \rho )^2}{f(t)^2}
\end{equation}
where we have used the fact that $t\le s\le a$ and  $f,\omega$
are nondecreasing.
Integrating \eqref{ff} from $t$ to $a$, after multiplication by $t^3$ we obtain
\[
\frac{t^3}{f(a)} - \frac{t^3}{f(t)} + \frac{\log(a/t)}{c}
\ \le\ \frac{3c}{8} t^{10/3}(a^{8/3}-t^{8/3})\frac{\omega(a\rho)^2}{f(t)^2}\,. 
\]
Hence we get
\begin{equation}\label{finalestimatef}
\frac{t^3}{f(t)} + c a^{8/3} t^{-8/3}\omega(a\rho)^2 \frac{t^6}{f(t)^2} \ \ge\ M.
\end{equation}

Eventually, we observe that
\[
f(t)=\int_{D_t^t}\left(1-\frac{z(\rho y)\cdot\onu}{F(\onu)}\right)\,dy = \frac{1}{\rho^d}\int_{D_{\rho t}^{\rho t}} \left(1-\frac{z(x)\cdot\onu}{F(\onu)}\right)\,dx\,,
\]
so that \eqref{finalestimatef} can be rewritten
\begin{equation}\label{almostdone}
\left(\frac{\displaystyle \int_{D_{\rho t}^{\rho t}}
\left(1-\frac{z(x)\cdot\onu}{F(\onu)}\right)dx}{(\rho t)^3}\right)^{-1}
\ \ge\ 
\frac{-1+\sqrt{ 1+4 M c a^{8/3} t^{-8/3} \omega(a\rho)^2}}
{2c a^{8/3}t^{-8/3} \omega(a\rho)^2}
%\ge\ M - M^2 ct\omega(a\rho)
\end{equation}
The value of $t$ being fixed, we can choose the value of $\rho$ small enough
in order to have
$4 M c a^{8/3} t^{-8/3} \omega(a\rho)^2 < 1$, and (using
$\sqrt{1+X}\ge 1+X/2 - X^2/8$ if $X\in (0,1)$), \eqref{almostdone}
yields
\begin{equation}\label{almostdone2}
\left(\frac{\displaystyle \int_{D_{\rho t}^{\rho t}}
\left(1-\frac{z\cdot\onu}{F(\onu)}\right)\, dy}{(\rho t)^3}\right)^{-1}
\ \ge\  M - M^2 ca^{8/3} t^{-8/3}\omega(a\rho)^2\ \ge\ \frac{3}{4}M\,.
\end{equation}
It  follows that
\begin{equation}
\limsup_{\e\to 0} \frac{\displaystyle \int_{D_{\e}^{\e}} \left(1-\frac{z\cdot\onu}{F(\onu)}\right)\,dy}{\e^3}
\ \le\ \frac{4}{3}M^{-1}
\end{equation}
and since $M$ is arbitrary, $0$ is indeed a Lebesgue point of $z$, with
value $\bar z=\nabla F(\onu)$ (recall that $1-\frac{z(x)\cdot\onu}{F(\onu)} \ge (C/F(\onu)) |z(x)-\bar z|^2$).

{\it Step 2.} When $F$ depends also on the $x$ variable, the proof follows along the same lines 
as in {\it Step 1}, taking into account the errors terms in \eqref{bigtwo} and \eqref{big4}. 
Keeping the same notations as in {\it Step 1}
and setting $\bar z:= \nabla_p F(0,\bar \nu)$ we find that since  $F^\circ(0,\bar z)\le 1$, there holds $\bar z\cdot \nu^{E_\rho}\le F(0,\nu^{E_\rho})$ and thus
\[\int_{\partial E_\rho \cap D_s^t} \bar z\cdot \nu^{E_\rho}-F(\rho x, \nu^{E_\rho}) d\H^{d-1}\le\int_{\partial E_\rho \cap D_s^t} |F(0,\nu^{E_\rho})-F(\rho x,\nu^{E_\rho})| d\H^{d-1}
\le C \rho s^{d-1}
\]
where the last inequality follows from $t\le s$ and the minimality of $E_\rho$ inside $D_s^t$. Now since 
\[\left( F^\circ\right)^2(0,z_\rho)-\left( F^\circ\right)^2(\rho x,z_\rho)\ge \left( F^\circ\right)^2(0,z_\rho)-1\ge 2 \frac{\bar \nu}{F(0,\bar \nu)} \cdot (z_\rho-z) +\delta^2|z_\rho-z|^2\]
we find that \eqref{star} transforms into, 
\[(\xi\cdot z_\rho)^2\le C \left[(F(0,\bar \nu)-\bar \nu\cdot z_\rho) +(\left( F^\circ\right)^2(0,z_\rho)-\left( F^\circ\right)^2(\rho x,z_\rho))\right]\]
for every $|\xi|\le 1$ and $\xi\cdot \bar z=0$, from which we get 
\begin{multline*}
 \int_{\partial D_s+(-t,t)\bar z}
\left((2\car{E_\rho} -1) t -\frac{\onu\cdot y}{F(\onu)} \right)
\,\left(z_\rho\cdot \xi\right)\,d\H^{d-1}
\\ \le\,
2CF(0,\onu) t\sqrt{t} \left(
\int_{\partial D_s+(-t,t)\bar z} %\hspace{-5mm}
 \left(1-\frac{z_\rho\cdot\onu}{F(0,\onu)}\right) \,d\H^{d-1}
\right)^\half \sqrt{\H^{d-2}(\partial D_s)}\\
+ 2Ct \,\int_{\partial D_s+(-t,t)\bar z}
 \left|\left( F^\circ\right)^2(0,z_\rho)-\left( F^\circ\right)^2(\rho x,z_\rho)\right|^{1/2}\, d\H^{d-1}\\
\le \,C F(0,\onu) t\sqrt{t} \left(
\int_{\partial D_s+(-t,t)\bar z} %\hspace{-5mm}
 \left(1-\frac{z_\rho\cdot\onu}{F(0,\onu)}\right) \,d\H^{d-1}
\right)^\half \sqrt{\H^{d-2}(\partial D_s)}+ Ct \rho^{1/2} s^{d-1} t\,.
\end{multline*}
Using these estimates, we finally get that, setting as before $f(s):=\int_{D^t_s} \left(1-\frac{z_\rho\cdot\onu}{F(0,\onu)}\right)dy$, there holds
\[f(s)^2\ \le\  c\left( s^{d-2} t^3  f'(s)
\,+\,t^{4-2/d} s^{2d-4+2/d} \omega(\rho s)^2+ \rho t s^{d-1}+\rho^{1/2} t^2 s^{d-1}\right)\,.\]
{}From this inequality, the proof can be concluded exactly as in {\it Step 1}.
\end{proof}

%\begin{remark}
%\textup{Since the modulus $\omega$ above is uniform, the convergence
%of the averages $(1/|D_\e^\e|)\int_{D_\e^\e} z(x)\,dx$ to $\onu$ seems uniform.
%However, it would require that given $t$, we can choose
%uniformly $\rho>0$ small enough such that, in the construction
%above, $\partial E_\rho\cap B_1(0)$ is in the strip $\{ |y_d|\le t F(\onu)\}$.
%There does not seem to be any reason for this.
%}
%\end{remark}
\begin{remark}\label{remDu}\rm
Assuming $F$ has some regularity (Lipschitz in the first variable,
and $C^{2,\beta}$ and even in the second, see~\cite{SchoenSimon}), then for
$d=2$ or $d=3$ and $g\in L^p(\Om)$ with $p>d$, $\partial E$ is of class $C^{1,\alpha}$ for some $\alpha>0$. In this case, \eqref{euleru} holds everywhere in $\supp(Du)$. 
\end{remark}

Eventually, we can also give a locally uniform convergence result (valid
in dimension $d=2,3$, with the assumption\footnote{Probably just technical.} that $F$ is even in dimension 3).
\begin{proposition}
For all $x\in\Om$ we let $$z_\rho(x):=\frac{1}{|B_\rho(0)|}\int_{B_\rho(x)\cap\Om}z \,dy\,.$$
Then, $F^\circ(x,z_\rho(x))\to 1$ locally uniformly on $S$.
\end{proposition}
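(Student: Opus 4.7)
The plan is to split the claim into an upper and a lower bound. The upper bound $\limsup_{\rho\to 0} F^\circ(x, z_\rho(x)) \le 1$, uniformly on compact subsets of $\Om$, should follow softly from convexity: since $F^\circ(x,\cdot)$ is convex and $F^\circ(y,z(y))\le 1$ for a.e.\ $y$, Jensen's inequality gives
\[
F^\circ(x, z_\rho(x)) \;\le\; \frac{1}{|B_\rho|}\int_{B_\rho(x)\cap \Om} F^\circ(x,z(y))\,dy \;\le\; 1 + \omega_F(\rho),
\]
where $\omega_F$ is a modulus of continuity of $F^\circ$ in the first variable on the compact set $\{|p|\le\|z\|_\infty\}$.

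For the matching lower bound $\liminf_{\rho\to 0} F^\circ(x,z_\rho(x))\ge 1$, uniform on compact subsets of $S$, I would argue by contradiction and blow-up, in the reverse spirit of Theorem~\ref{zfixPL}. Suppose the convergence fails on a compact $K\subset S$: extract $x_n\in K$ with $x_n\to x_\infty\in K$, radii $\rho_n\downarrow 0$, and $\eps>0$ such that $F^\circ(x_n, z_{\rho_n}(x_n))\le 1-\eps$. By Proposition~\ref{proplevelset} each $x_n$ lies on the boundary of a level set $E_n=E^{x_n}$, whose levels $s_n$ are uniformly bounded since $u$ is locally $L^\infty$. Rescale by setting $\tilde E_n:=(E_n-x_n)/\rho_n$, $\tilde z_n(y):=z(x_n+\rho_n y)$, $\tilde g_n(y):=g(x_n+\rho_n y)$: each $\tilde E_n$ is a local minimizer of
\[
E\mapsto \int_{\partial^* E\cap B_R} F(x_n+\rho_n y,\nu^E)\,d\HH + \rho_n\int_{E\cap B_R}\tilde g_n\,dy
\]
calibrated by $\tilde z_n$, with $\|\rho_n\tilde g_n\|_{L^d(B_R)}\to 0$. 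Combining Lemma~\ref{lemdens} with weak-$*$ compactness of $L^\infty$, I can extract subsequences so that $\tilde E_n\to \bar E$ in $L^1_{loc}$ and Hausdorff, $\tilde z_n\stackrel{*}{\rightharpoonup}\bar z$ in $L^\infty$, and by Proposition~\ref{stabilitycalib} $\bar z$ calibrates $\bar E$ for the autonomous functional with density $F(x_\infty,\cdot)$ in every ball, with $0\in\partial\bar E$.

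The hard part is identifying the blow-up limit $\bar E$ as a half-space, which is where the dimensional restriction $d\le 3$ and the evenness assumption in dimension $3$ enter. I would appeal to the classical regularity theory for anisotropic minimal surfaces~\cite{ASS,SchoenSimon} (see also Remark~\ref{remDu}): under these hypotheses every entire minimizer of the autonomous anisotropic perimeter has to be a half-space $\bar E=\{y\cdot\bar\nu\ge 0\}$. Once this is granted, the same final argument as in the proof of Theorem~\ref{zfixPL} forces $\bar z\equiv \nabla_p F(x_\infty,\bar\nu)$, so in particular $F^\circ(x_\infty,\bar z)=1$. To conclude, I test the weak-$*$ convergence $\tilde z_n\stackrel{*}{\rightharpoonup}\bar z$ against $\car{B_1}/|B_1|$ to obtain
\[
z_{\rho_n}(x_n)\;=\;\frac{1}{|B_1|}\int_{B_1}\tilde z_n\,dy\;\longrightarrow\;\bar z,
\]
and joint continuity of $F^\circ$ then yields $F^\circ(x_n, z_{\rho_n}(x_n))\to F^\circ(x_\infty,\bar z)=1$, contradicting the assumed defect $1-\eps$.
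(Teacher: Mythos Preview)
Your upper bound and the overall blow-up/Bernstein set-up are fine, and they do coincide with the first half of the paper's argument: one shows by contradiction, compactness, and the Bernstein-type results cited in the paper, that the blow-ups $(E^{x}-x)/\rho$ become uniformly flat on compact subsets of $S$.

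The gap is in your step~5. You write that ``the same final argument as in the proof of Theorem~\ref{zfixPL} forces $\bar z\equiv \nabla_p F(x_\infty,\bar\nu)$''. In Theorem~\ref{zfixPL} the implication goes the other way: there $\bar z$ is known \emph{a priori} to be a constant (because $\bar x$ was assumed to be a Lebesgue point of $z$, so $z_\rho\to\bar z$ strongly in $L^1$), and the constancy of $\bar z$ is what forces $\bar E$ to be a half-space. In your situation you only have $\tilde z_n\stackrel{*}{\rightharpoonup}\bar z$, and the weak-$*$ limit $\bar z$ is in general a non-constant divergence-free field with $F^\circ(x_\infty,\bar z(y))\le 1$ a.e.\ which calibrates the half-space $\bar E$. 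Such calibrations are highly non-unique: the calibration condition only fixes the normal trace $[\bar z,\bar\nu]=F(x_\infty,\bar\nu)$ on the hyperplane $\partial\bar E$, not the field in the bulk. Hence you cannot conclude that $\bar z$ is the constant $\nabla_p F(x_\infty,\bar\nu)$, and then Jensen's inequality gives only
\[
F^\circ\!\left(x_\infty,\ \frac{1}{|B_1|}\int_{B_1}\bar z\,dy\right)\ \le\ 1,
\]
with strict inequality possible, so no contradiction is reached.

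This is precisely the difficulty that the paper does not bypass by soft compactness. The paper uses the compactness/Bernstein argument only to obtain the \emph{uniform strip bound}: for every $t>0$ there is $\rho_0>0$ such that for all $x\in K\cap S$ and $\rho\le\rho_0$ there exists $\onu^x_\rho\in\Sp^{d-1}$ with $\partial((E^x-x)/\rho)\cap B_1\subset\{|y\cdot\onu^x_\rho|\le t\}$. This strip condition is exactly the input needed to run the quantitative ODE argument of Theorem~\ref{propLebesgue} (the differential inequality for $f(s)=\int_{D^t_s}(1-z_\rho\cdot\onu/F(\onu))\,dy$), now with $\onu^x_\rho$ in place of $\nu^{E^x}(x)$. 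That argument produces, for each $M$, a uniform bound of the type $\frac{1}{|D_\e^\e|}\int_{D_\e^\e}(1-z\cdot\onu/F(\onu))\,dy\le C/M$ for all $x\in K\cap S$ and $\e$ small, which in turn (via the strict convexity inequality $1-z\cdot\onu/F(\onu)\ge c|z-\nabla_pF(\onu)|^2$) gives $F^\circ(x,z_\rho(x))\to 1$ uniformly. In short, the passage from ``flat blow-ups'' to ``averages of $z$ have $F^\circ$ close to $1$'' genuinely requires the quantitative estimate of Theorem~\ref{propLebesgue}; weak-$*$ compactness of $z$ alone is not enough.
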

\begin{proof}
Given $K\subset \Om$ a compact set, we can check that 
for any $t>0$, there exists $\rho_0>0$ such that for any $x\in K\cap S$,
if $E^x$ is the level set of $u$ through $x$, then 
for any $\rho\le\rho_0$, the boundary of
$(E^x-x)/\rho\cap B_1(0)$ lies in a strip of width $2t$, that is, there is
$\onu^x_\rho\in \Sp^{d-1}$ with $\partial((E^x-x)/\rho)\cap B_1(0)\subset
\{|y\cdot\onu^x_\rho |\le t\}$).

Indeed, if this is not the case, one can find $t>0$, $\rho_k\to 0$, $x_k\in K\cap S$,
such that $\partial((E^{x_k}-x_k)/\rho_k)\cap B_1(0)$ is not contained
in any strip of width $2t$. Up to a subsequence we may assume
that $x_k\to x\in K\cap S$, and from the bound on the perimeter, that $(E^{x_k}-x_k)/\rho_k$ converges (in the Kuratowski sense) to a local minimizer of $\int_{\partial E} F(0,\nu^E) d\H^{d-1}$
 and is thus a halfspace.\footnote{If $d=2$, this Bernstein result readily follows from the strict convexity of $F$, see \cite[Prop 3.6]{CGN} 
whereas for $d=3$, see \cite[Thm.~4.1]{white}, where it is assumed that $F$ is even.
In the case of the area i.e when $F(x,Du)=|Du|$ and $d\le 7$, see also \cite[Rem 3.2]{GoMa}.} It yields that $\partial((E^{x_k}-x_k)/\rho_k)\cap B_1(0)$
converges in the Hausdorff sense (thanks to the density estimates) to a
hyperplane. We easily obtain a contradiction.

The thesis follows when we observe that the proof of Proposition~\ref{propLebesgue}
can be reproduced by replacing %, at each scale $\rho>0$, 
the direction $\nu^{E^x}(x)$ (which exists only if $x$ lies in the reduced boundary of $E^x$)
with the direction $\onu^x_\rho$ given above.
\end{proof}
%If $g\in L^p$ with $p>d$ then $\partial E$ is $C^{1,\alpha}$ and we get the uniform convergence of $z_\rho$ yielding that $z$ is continuous on $Sup(|Du|)$.\\
\begin{remark}\rm
In the Euclidean case ($F=|\,.\,|$) it has already been observed in~\cite{AmbPao}
that the blow-ups are flat at each point of the boundary of a set
with curvature in $L^d$ (but for a closed set of maximal dimension $d-8$),
however the spiral example in~\cite{GonzalezMassariTamanini} shows that
even if $d=2$, the orientation of the limit line may not be unique.
\end{remark}
\subsection{A counterexample.}
We provide an example where $g\in L^{d-\e}(\Om)$, with $\e>0$ arbitrarily small, 
and Theorem \ref{propLebesgue} does not hold. 

Let $\Om=B_1(0)$ be the unit ball of $\R^d$ and let $E=\Om\cap\{ x_d\le 0\}$. We shall construct a vector
field $z:\Om\to \R^d$ such that $z=\nu^E$ on $\partial E\cap \Om$, $|z|\le 1$ everywhere in $\Om$, ${\rm div}z\in L^{d-\e}(\Om)$,
but $0$ is not a Lebesgue point of $z$. Notice that $E$ minimizes the functional \eqref{geomper} with $g={\rm div}z$.

Letting $r_n\to 0$ be a decreasing sequence to be determined later, and let $B_n=B_{r_n}(x_n)$ with $x_n=2 r_n e_d$.
Without loss of generality, we may assume $r_{n+1}<r_n/4$ so that the balls $B_n$ are all disjoint.
We define the vector field $z$ as follows: $z(x)=e_d$ if $x\in\Om\setminus\cup_n B_n$, and
$z(x)=|x-x_n|e_d$ if $x\in B_n$. 
It follows that ${\rm div}z=0$ in $\Om\setminus\cup_n B_n$ and $|{\rm div}z|\le 1/r_n$ in $B_n$, so that 
\[
\int_\Om |{\rm div}z|^{d-\eps} \,dx = \sum_n \int_{B_n} |{\rm div}z|^{d-\eps} \,dx \le \omega_d \sum_n r_n^\eps<+\infty
\]
if we choose $r_n$ converging to zero sufficiently fast, so that $g=-\Div z\in L^{d-\eps}(\Om)$.

However, since $z\cdot e_d\le 1/2$ in $B_{r_n/2}(x_n)$, we also have
\[
\int_{B_{3r_n}(0)}z\cdot e_d\,dx \le \left|B_{3r_n}(0)\right| -\frac 12 \left|B_{r_n/2}(x_n)\right| 
\]
so that 
\[
\frac{1}{\left|B_{3r_n}(0)\right|}\, \int_{B_{3r_n}(0)}z\cdot e_d\,dx
\le 1-\frac{1}{6^d}<1\,.
\]
On the other hand, for $\delta\in (0,1/6^d)$ we have
\[
\frac{1}{\left|B_{r_n}(0)\right|}\,
\int_{B_{r_n}(0)}z\cdot e_d\,dx \ge 
\frac{1}{\left|B_{r_n}(0)\right|}\left(
\left|B_{r_n}(0)\right| - \sum_{i=n+1}^\infty \left|B_{r_i}(x_i)\right|\right)\ge 1-\delta\,,
\]
if we take the sequence $r_n$ converging to $0$ sufficiently fast.
It follows that $0$ is not a Lebesgue point of $z$.

%%%%%%%%%%%%%%%%%%%%%%%%%%%%%%%%%%%%%%%%%%%%%%%%%%%%%%%%%%%%%%%%%%%%%%%%%%%%%%%%%%%%%%%%%%%%%

\end{document}